\newtheorem{theorem}{Theorem}[section]
\newtheorem{lemma}[theorem]{Lemma}
\newtheorem{proposition}[theorem]{Proposition}
\newtheorem{example}[theorem]{Example}
\newtheorem{corollary}[theorem]{Corollary}
\newtheorem{remark}[theorem]{Remark}
\numberwithin{equation}{section}
\begin{document}

\title{Justifications of spatial entropies of multi-dimensional symbolic dynamical systems}


\author{Wen-Guei Hu$^{\star}$}
\address{Department of Applied Mathematics, National Chiao Tung University, Hsinchu 300, Taiwan}
\email{wghu@mail.nctu.edu.tw}
\thanks{$^{\star\star}$ The first author would like to thank the ST Yau Center for partially supporting this research.}

\author{Song-Sun Lin$^{\dagger}$}
\address{Department of Applied Mathematics, National Chiao Tung University, Hsinchu 300, Taiwan}
\email{sslin@math.nctu.edu.tw}
\thanks{$^{\dagger}$ The second author would like to thank the National Science Council, R.O.C. (Contract No. NSC 103-2115-M-009-004- ) and the ST Yau Center for partially supporting this research.}

%

\begin{abstract}
The commonly used spatial entropy $h_{r}(\mathcal{U})$ of the multi-dimensional shift space $\mathcal{U}$ is the limit of growth rate of admissible local patterns on finite rectangular sublattices which expands to whole space $\mathbb{Z}^{d}$, $d\geq 2$. This work studies spatial entropy $h_{\Omega}(\mathcal{U})$ of shift space $\mathcal{U}$ on general expanding system $\Omega=\{\Omega(n)\}_{n=1}^{\infty}$ where $\Omega(n)$ is increasing finite sublattices and expands to $\mathbb{Z}^{d}$. $\Omega$ is called genuinely $d$-dimensional if $\Omega(n)$ contains no lower-dimensional part whose size is comparable to that of its $d$-dimensional part. We show that $h_{r}(\mathcal{U})$ is the supremum of $h_{\Omega}(\mathcal{U})$ for all
genuinely two-dimensional $\Omega$. Furthermore, when $\Omega$ is genuinely $d$-dimensional and satisfies certain conditions, then $h_{\Omega}(\mathcal{U})=h_{r}(\mathcal{U})$. On the contrary, when $\Omega(n)$ contains a lower-dimensional part, then $h_{r}(\mathcal{U})<h_{\Omega}(\mathcal{U})$ for some $\mathcal{U}$. Therefore, $h_{r}(\mathcal{U})$ is appropriate to be the $d$-dimensional spatial entropy.
\end{abstract}

\maketitle

\section{Introduction}

\hspace{0.5cm} Spatial entropy is known to measure the complexity of additive shift spaces and can be determined studying the growth rates of their admissible local patterns. Unlike in the one-dimensional case, subsequences of finite sublattices can approximate $\mathbb{Z}^{d}$, $d\geq 2$, in various ways. Among them, the rectangular sublattice is the most commonly used; see \cite{0-1,0-1-1,0-2,1,2,11,12,25,29,33-1,33-2,33-3}. This study investigates the spatial entropies of shift spaces according to their approximating ways to $\mathbb{Z}^{d}$, and compares them with commonly used rectangular spatial entropies.

For simplicity, this introduction considers only the case of $d=2$. Specifically, let $\mathcal{A}=\{0,1,\cdots,N-1\}$, $N\geq 2$, and
$\mathcal{U}\subseteq \mathcal{A}^{\mathbb{Z}^{2}}$ be an additive shift space, with $\mathbb{Z}^{2}$ as the two-dimensional lattice. Denote by $\Omega=\left\{ \Omega(n)\right\}_{n=1}^{\infty}$  an expanding system of finite lattice domains of $\mathbb{Z}^{2}$ with

\begin{equation}\label{eqn:1.0-1}
\Omega(n)\subset \Omega(n+1)
\end{equation}
and
\begin{equation}\label{eqn:1.0-2}
\underset{n=1}{\overset{\infty}{\bigcup}} \Omega(n)=\mathbb{Z}^{2}.
\end{equation}

Denote by $h_{\Omega}(\mathcal{U})$ the spatial entropy of $\mathcal{U}$ with respect to $\Omega$,

\begin{equation}\label{eqn:1.1}
h_{\Omega}(\mathcal{U})=\underset{n\rightarrow \infty}{\limsup}\hspace{0.1cm} \frac{1}{|\Omega(n)|}\log\Gamma(\Omega(n),\mathcal{U}),
\end{equation}
where $|\Omega(n)|$ is the cardinal number of $\Omega(n)$ and $\Gamma(\Omega(n),\mathcal{U})=\left|\mathcal{U}\mid_{\Omega(n)}\right|$, the cardinal number of $\mathcal{U}$ that is restricted on $\Omega(n)$. In particular, when $\Omega=\left\{\mathbb{Z}_{m\times n}\right\}_{m,n=1}^{\infty}$ is a sequence of rectangular sublattices, the rectangular entropy $h_{r}(\mathcal{U})$ is defined as

\begin{equation}\label{eqn:1.2}
h_{r}(\mathcal{U})=\underset{m,n\rightarrow \infty}{\limsup}\hspace{0.1cm} \frac{1}{mn}\log\Gamma_{m\times n}(\mathcal{U}),
\end{equation}
where $\Gamma_{m_{1}\times m_{2}}=\left|\mathcal{U}\mid_{\mathbb{Z}_{m_{1}\times m_{2}}}\right|$, $m_{1},m_{2}\geq 1$.

The sub-additive property of $\log\Gamma_{m_{1}\times m_{2}}$ in $m_{1}$ and $m_{2}$ is well known to imply that the limit of (\ref{eqn:1.2}) always exists and is commonly referred to as the spatial entropy in the literature \cite{12}. This study investigates $h_{\Omega}(\mathcal{U})$ for general $\Omega$ and $\mathcal{U}$ and its relationship with $h_{r}(\mathcal{U})$.

This study is directly motivated by our recent study of the spatial entropy of a multiplicative integer system \cite{0-1}. Multiplicative integer systems arise in the study of multiple ergodic averages and have been intensively studied in recent years; see \cite{17,18,26,32,33,43,44,45} and the references therein. One of the important issue is to compute Minkowski (box) dimension and Hausdorff dimension of such systems and to compare them. Unlike additive shift spaces, these two dimensions are unequal for most known multiplicative integer systems; see Fan \emph{et al.} \cite{17,18}, Kenyon \emph{et al.} \cite{32,33} and Peres \emph{et al.} \cite{43}. In \cite{0-1}, we introduce the spatial entropy to compute the Minkowski dimension. It is briefly introduced it as follows.

A multiplicative integer system $\mathbb{X}\subset \{0,1,2,\cdots,N-1\}^{\mathbb{N}}$ satisfies $\left(x_{rk}\right)\in\mathbb{X}$ for any $\left(x_{k}\right)\in\mathbb{X}$ and $r\geq 1$, where $\mathbb{N}$ is the set of all natural numbers. The spatial entropy $h(\mathbb{X})$ of $\mathbb{X}$ is defined by

\begin{equation}\label{eqn:1.3}
h_{r}(\mathbb{X})=\underset{n\rightarrow \infty}{\limsup}\hspace{0.1cm} \frac{1}{n}\log\left|X_{n}\right|,
\end{equation}
where $X_{n}=\mathbb{X}\mid_{\mathbb{Z}_{n}}$ and $\mathbb{Z}_{n}=\{1,2,\cdots,n\}$. For any $q\geq2$, denote by the multiplicative system

\begin{equation}\label{eqn:1.4}
\mathbb{X}_{q}^{0}=\left\{ (x_{k})\in\{0,1\}^{\mathbb{N}} \hspace{0.1cm} \mid \hspace{0.1cm} x_{k}x_{qk}=0, k\geq 1\right\}.
\end{equation}

In \cite{0-1}, it is verified that

\begin{equation}\label{eqn:1.5}
h(\mathbb{X}_{q}^{0})=(q-1)^{2} \hspace{0.1cm} \underset{k=1}{\overset{\infty}{\sum}}\hspace{0.1cm} \frac{1}{q^{k+1}}\log a_{k},
\end{equation}
where $a_{k}=a_{k-1}+a_{k-2}$, $k\geq 3$, is the Fibonacci number with $a_{1}=2$ and $a_{2}=3$. To obtain (\ref{eqn:1.5}), $\mathbb{N}$ is rearranged as the first quadrant of a two-dimensional lattice as

\begin{equation}\label{eqn:1.6}
\mathbb{N}=\mathcal{I}_{q}\times M_{q},
\end{equation}
where $M_{q}=\left\{  q^{k}  \hspace{0.1cm} \mid \hspace{0.1cm} q\geq 0   \right\}$ and $\mathcal{I}_{q}=\left\{ i\in\mathbb{N} \hspace{0.1cm} \mid \hspace{0.1cm} q\nmid i  \right\}$; see Fig. 1.1 for $q=2$.

\begin{equation*}
\begin{array}{c}
\psfrag{y}{$\mathbb{M}_{2}$}
\psfrag{z}{$\mathcal{I}_{2}$}
\includegraphics[scale=0.9]{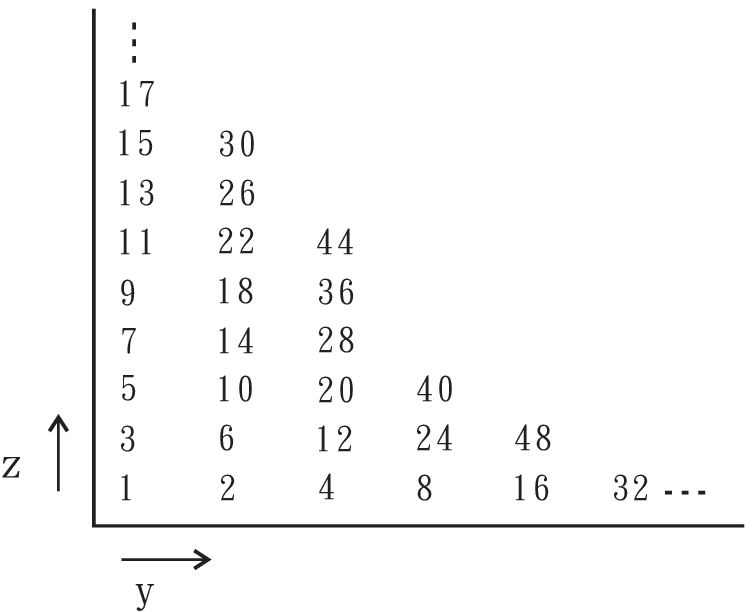}
\end{array}
\end{equation*}
\begin{equation*}
\text{Figure 1.1.}
\end{equation*}

The entropy $h(\mathbb{X}_{q}^{0})$ can be easily obtained using the following formula

\begin{equation}\label{eqn:1.7}
q^{n}= (n+1)+n(q-2)+(q-1)^{2}\hspace{0.1cm} \underset{k=1}{\overset{n-1}{\sum}}\hspace{0.1cm} k q^{n-n-k}.
\end{equation}
After a second thought, the result thus obtained can also be interpreted as a study of the two-dimensional entropy of the additive shift of finite type $\mathcal{U}_{\mathcal{B}}\subset \{0,1\}^{\mathbb{Z}^{2}}$, where the forbidden set $\mathcal{F}$ of $\mathcal{U}_{\mathcal{B}}$ is $\mathcal{F}=\left\{\begin{array}{c} \includegraphics[scale=0.6]{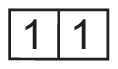} \end{array}\right\}$, meaning that the basic set of admissible patterns $\mathcal{B}\subset \{0,1\}^{\mathbb{Z}_{2\times 2}}$ is given as

\begin{equation}\label{eqn:1.8}
\mathcal{B}=\left\{\begin{array}{ccccccccc}
 \includegraphics[scale=0.6]{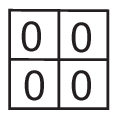}, &
  \includegraphics[scale=0.6]{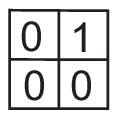}, &
   \includegraphics[scale=0.6]{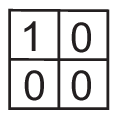}, &
    \includegraphics[scale=0.6]{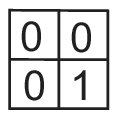}, &
     \includegraphics[scale=0.6]{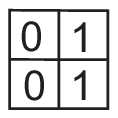}, &
      \includegraphics[scale=0.6]{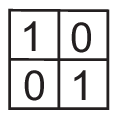}, &
       \includegraphics[scale=0.6]{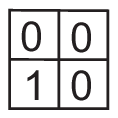}, &
        \includegraphics[scale=0.6]{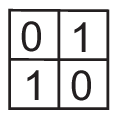}, &
         \includegraphics[scale=0.6]{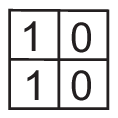}
\end{array}\right\}.
\end{equation}

$\mathcal{U}_{\mathcal{B}}$ is considered to satisfy the Golden-Mean condition $x_{i,j}x_{i+1,j}=0$ in the horizontal direction and to be unconstrained in the vertical direction. Now, denote by

\begin{equation}\label{eqn:1.9}
\Omega_{q}^{+}(n)=\left\{ k\in \mathbb{N}  \hspace{0.1cm} \mid \hspace{0.1cm} 1\leq k\leq q^{n}  \right\}
\end{equation}
in $\mathcal{I}_{q}\times M_{q}$, as presented in Fig. 1.2 (a). By reflecting $\Omega_{q}^{+}(n)$ in the horizontal and vertical axes, the lattice $\Omega_{q}(n)\subset \mathbb{Z}^{2}$ is constructed as shown in Fig. 1.2 (b).

\begin{equation*}
\psfrag{a}{{\footnotesize $q^{n}-1$}}
\psfrag{b}{{\footnotesize $q^{n}$}}
\psfrag{n}{{\footnotesize $n$}}
\psfrag{1}{$1$}
\psfrag{c}{{\footnotesize $M_{q}$}}
\psfrag{d}{{\footnotesize $\mathcal{I}_{q}$}}
\psfrag{e}{(a) $\Omega_{q}^{+}(n)$}
\psfrag{f}{(b) $\Omega_{q}(n)$}
\psfrag{g}{(c) }
\psfrag{p}{{\footnotesize $\omega(n)$}}
 \includegraphics[scale=0.8]{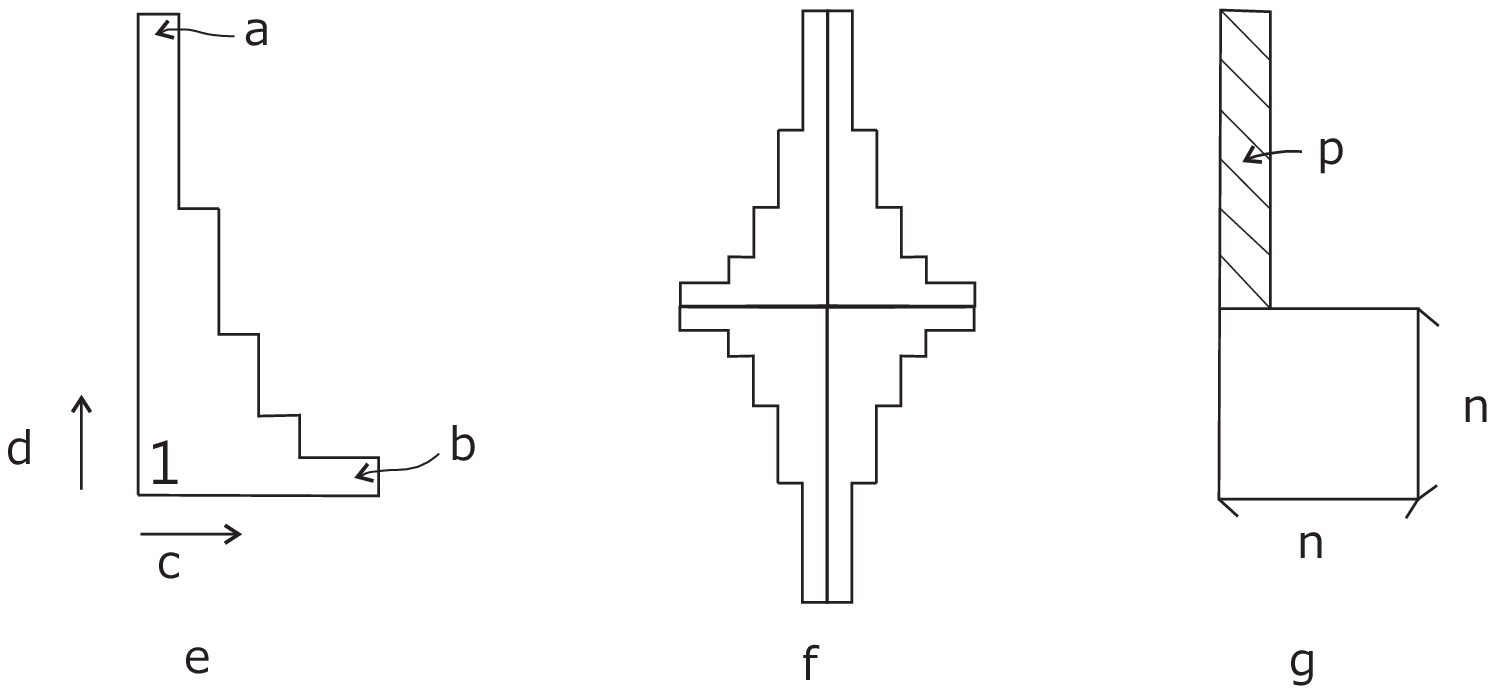}
\end{equation*}
\begin{equation*}
\text{Figure 1.2.}
\end{equation*}

Then, for each $q\geq 2$, $\Omega_{q}=\{\Omega_{q}(n)\}_{n=1}^{\infty}$ gives an approximation to $\mathbb{Z}^{2}$. Clearly, (\ref{eqn:1.7}) implies

\begin{equation}\label{eqn:1.10}
\Gamma\left(\Omega_{q}(n),\mathcal{U}_{\mathcal{B}}\right)= a_{2(n+1)}^{2}a_{2n}^{2(q-2)} \hspace{0.1cm}\left( \underset{k=1}{\overset{n-1}{\prod}} \hspace{0.1cm} a_{2k}^{2(q-1)^{2}q^{n-n-k}}\right)
\end{equation}
and $\left| \Omega_{q} (n)\right|= 4\left| \Omega_{q}^{+} (n)\right|=4q^{n}$. Therefore,

\begin{equation}\label{eqn:1.11}
h_{\Omega_{q}}(\mathcal{U}_{\mathcal{B}})= \frac{1}{2} (q-1)^{2}  \hspace{0.1cm}\underset{k=1}{\overset{n-1}{\sum}} \hspace{0.1cm} \frac{1}{q^{k+1}}\log a_{2k},
\end{equation}
where $\Omega_{q}=\left\{ \Omega_{q}(n)\right\}_{n=1}^{\infty}$.

It can be easily verified that

\begin{equation}\label{eqn:1.12}
h_{r}(\mathcal{U}_{\mathcal{B}})= \log g.
\end{equation}
$h_{\Omega_{q}}(\mathcal{U}_{\mathcal{B}})$ can be shown to be strictly increasing in $q$ and tends to $\log 2$ as $q\rightarrow\infty$, and

\begin{equation}\label{eqn:1.13}
h_{\Omega_{q}}(\mathcal{U}_{\mathcal{B}})> h_{r}(\mathcal{U}_{\mathcal{B}})
\end{equation}
for any $q\geq 2$. Therefore, shift space $\mathcal{U}_{\mathcal{B}}$ has infinitely many spatial entropies $h_{\Omega_{q}}(\mathcal{U}_{\mathcal{B}})$.

The mechanism of (\ref{eqn:1.13}) can be explained as follows. From (\ref{eqn:1.7}), $\Omega_{q}^{+}(n)$ includes $(q-1)^{2}q_{n-1-k}$ copies of $\mathbb{Z}_{k}$, so implying there are two copies of $\mathbb{Z}_{2k\times (q-1)^{2}q_{n-1-k}}$ in $\Omega_{q}(n)$ for $1\leq k\leq n-1$. For fixed $k$ and large $n$, $\mathbb{Z}_{2k\times (q-1)^{2}q_{n-1-k}}$ has the form of a long stick in the vertical direction, as a one-dimensional object whose size is comparable to that of its two-dimensional part. Hence, lack of a constraint in the vertical direction of $\mathcal{U}_{\mathcal{B}}$ in $\Omega_{q}$ provides more admissible patterns than in two-dimensional rectangular lattice $\mathbb{Z}_{m_{1}\times n_{1}}$ with $m_{1}n_{1}=4q^{n}$.

The result (\ref{eqn:1.13}) indicates that the spatial entropy $h_{\Omega}(\mathcal{U})$ of (\ref{eqn:1.1}) that describes the growth rate of patterns of $\mathcal{U}$ depends
very strongly on how $\Omega=\{\Omega(n)\}$ approximates $\mathbb{Z}^{2}$. Therefore, the fundamental problem of when $h_{\Omega}(\mathcal{U})=h_{r}(\mathcal{U})$ and when $h_{\Omega}(\mathcal{U})\neq h_{r}(\mathcal{U})$ must be investigated.

The only known relevant result in the literature \cite{0} is that of Ballister \emph{et al.} who proved $h_{\Omega}(\mathcal{U})=h_{r}(\mathcal{U})$ when $\Omega=\{\Omega(n)\}_{n=1}^{\infty}$ is a sequence of bounded convex sets whose inradii tend to infinity for any additive shift space.

 The main results obtained herein for $h_{\Omega}(\mathcal{U})=h_{r}(\mathcal{U})$ are as follows. Let $\mathbb{Z}_{m_{1}\times n_{1}}$ be the smallest rectangular lattice that contains $\Omega(n)$, and let $\partial \Omega (n)$ be the boundary of $\Omega(n)$. Denote by $\Omega '(n)$ the complement of $\Omega(n)$ in $\mathbb{Z}_{m_{1}\times n_{1}}$. Now, the following result holds for $h_{\Omega}(\mathcal{U})=h_{r}(\mathcal{U})$.

\begin{theorem}
\label{Theorem:1.1}
Let $\mathcal{U}\subseteq\{0,1,\cdots,N-1\}^{\mathbb{Z}^{2}}$ be an additive shift space. If
\begin{equation}\label{eqn:1.14}
\underset{n\rightarrow\infty}{\limsup} \frac{|\partial\Omega(n)|}{|\Omega(n)|}=0,
\end{equation}
then
\begin{equation}\label{eqn:1.14-1}
h_{\Omega}(\mathcal{U})\leq h_{r}(\mathcal{U}).
\end{equation}
Furthermore, if (\ref{eqn:1.14}) and
\begin{equation}\label{eqn:1.15}
\sup\left\{\frac{|\Omega'(n)|}{|\Omega(n)|}: n\geq 1\right\}<\infty
\end{equation}
hold, then
\begin{equation}\label{eqn:1.16}
h_{\Omega}(\mathcal{U})= h_{r}(\mathcal{U}).
\end{equation}
\end{theorem}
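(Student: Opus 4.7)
Both inequalities will come from the same $k\times k$ tiling device. For the upper bound (\ref{eqn:1.14-1}), fix $k\ge 1$, tile $\mathbb{Z}^{2}$ by disjoint translates of $\mathbb{Z}_{k\times k}$, and let $T_{k}(n)$ be the number of tiles meeting $\Omega(n)$. The restriction of any admissible pattern on the union of these tiles to a single tile is an admissible $k\times k$ pattern, so
\begin{equation*}
\Gamma(\Omega(n),\mathcal{U})\le \Gamma_{k\times k}(\mathcal{U})^{T_{k}(n)}.
\end{equation*}
The $T_{k}(n)$ tiles split into those contained in $\Omega(n)$ (at most $|\Omega(n)|/k^{2}$) and those straddling $\partial\Omega(n)$; each straddler contains a point of $\partial\Omega(n)$ (by a lattice walk inside the tile from an interior point to an exterior point), bounding their count by $|\partial\Omega(n)|$. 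Dividing by $|\Omega(n)|$, applying $\limsup_{n}$ with (\ref{eqn:1.14}), and then sending $k\to\infty$ (using $k^{-2}\log\Gamma_{k\times k}(\mathcal{U})\to h_{r}(\mathcal{U})$ by subadditivity) yields (\ref{eqn:1.14-1}).

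For the reverse inequality under (\ref{eqn:1.15}), use the disjoint decomposition $\mathbb{Z}_{m_{1}\times n_{1}}=\Omega(n)\sqcup\Omega'(n)$, which gives $\Gamma_{m_{1}\times n_{1}}(\mathcal{U})\le \Gamma(\Omega(n),\mathcal{U})\,\Gamma(\Omega'(n),\mathcal{U})$ and hence
\begin{equation*}
\log\Gamma(\Omega(n),\mathcal{U})\ge \log\Gamma_{m_{1}\times n_{1}}(\mathcal{U})-\log\Gamma(\Omega'(n),\mathcal{U}).
\end{equation*}
A two-variable Fekete argument on the subadditive $\log\Gamma_{m\times n}$ in each coordinate gives the uniform lower bound $\log\Gamma_{m_{1}\times n_{1}}(\mathcal{U})\ge h_{r}(\mathcal{U})\,m_{1}n_{1}$. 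For the upper bound on $\log\Gamma(\Omega'(n),\mathcal{U})$ I rerun the tiling estimate with $\Omega'(n)$ in place of $\Omega(n)$, accepting a boundary error $|\partial\Omega'(n)|\log\Gamma_{k\times k}(\mathcal{U})$. Every boundary point of $\Omega'(n)$ in $\mathbb{Z}^{2}$ is adjacent either to $\Omega(n)$ or to $\mathbb{Z}^{2}\setminus\mathbb{Z}_{m_{1}\times n_{1}}$, so $|\partial\Omega'(n)|\le O(|\partial\Omega(n)|)+2(m_{1}+n_{1})$; condition (\ref{eqn:1.15}) forces $m_{1}n_{1}\le(1+C)|\Omega(n)|$ and $\bigcup_{n}\Omega(n)=\mathbb{Z}^{2}$ forces $\min(m_{1},n_{1})\to\infty$, so $m_{1}+n_{1}=o(m_{1}n_{1})=o(|\Omega(n)|)$, and combined with (\ref{eqn:1.14}) this gives $|\partial\Omega'(n)|=o(|\Omega(n)|)$. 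Substituting $m_{1}n_{1}=|\Omega(n)|+|\Omega'(n)|$ and $|\Omega'(n)|\le C|\Omega(n)|$, then taking $\limsup_{n}$, yields
\begin{equation*}
h_{\Omega}(\mathcal{U})\ge h_{r}(\mathcal{U})-C\!\left(\frac{\log\Gamma_{k\times k}(\mathcal{U})}{k^{2}}-h_{r}(\mathcal{U})\right),
\end{equation*}
and sending $k\to\infty$ kills the correction, establishing (\ref{eqn:1.16}).

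\emph{Main obstacle.} The delicate step is the lower bound. Since (\ref{eqn:1.15}) only controls $|\Omega'(n)|/|\Omega(n)|$ by a constant and not by $o(1)$, the naive fill-in estimate $\Gamma_{m_{1}\times n_{1}}(\mathcal{U})\le N^{|\Omega'(n)|}\Gamma(\Omega(n),\mathcal{U})$ leaves a loss of order $C\log N$ per site, which in general exceeds $h_{r}(\mathcal{U})$. Replaying the tiling bound on $\Omega'(n)$ is precisely what replaces $\log N$ by the near-optimal density $k^{-2}\log\Gamma_{k\times k}(\mathcal{U})\to h_{r}(\mathcal{U})$, and extracting this improvement is where both hypotheses enter simultaneously: (\ref{eqn:1.14}) controls $|\partial\Omega(n)|$, while (\ref{eqn:1.15}) together with $\Omega(n)\uparrow\mathbb{Z}^{2}$ controls the extra bounding-box perimeter $2(m_{1}+n_{1})$ that appears when boundary-estimating $\Omega'(n)$.
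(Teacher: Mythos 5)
Your proof is correct, and its skeleton coincides with the paper's (the paper proves Theorem \ref{Theorem:1.1} as the rectangular special case of Theorem \ref{theorem:3.2}): the upper bound is the same tiling-plus-boundary count, and the lower bound is the same subtraction $\log\Gamma(\Omega(n),\mathcal{U})\geq\log\Gamma_{m_{1}\times n_{1}}(\mathcal{U})-\log\Gamma(\Omega'(n),\mathcal{U})$ inside the minimal bounding rectangle. The one place you genuinely diverge---and improve on the written argument---is in controlling $\log\Gamma(\Omega'(n),\mathcal{U})$. The paper bounds it by applying the upper-bound argument to $\Omega'(n)$ itself, which is why Theorem \ref{theorem:3.2} carries the additional hypothesis (\ref{eqn:3.9-00}), $\limsup_{n}|\partial\Omega'(n)|/|\Omega'(n)|=0$; that hypothesis does not appear in Theorem \ref{Theorem:1.1} and does not follow from (\ref{eqn:1.14}) and (\ref{eqn:1.15}) alone (e.g.\ when $|\Omega'(n)|$ stays small the ratio can stay bounded away from $0$). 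You instead normalize by $|\Omega(n)|$ and show directly that $|\partial\Omega'(n)|\leq O(|\partial\Omega(n)|)+2(m_{1}+n_{1})=o(|\Omega(n)|)$, using $\min(m_{1},n_{1})\rightarrow\infty$ (from $\Omega(n)\uparrow\mathbb{Z}^{2}$) and $m_{1}n_{1}\leq(1+C)|\Omega(n)|$ (from (\ref{eqn:1.15})); this is exactly the derivation needed to make Theorem \ref{Theorem:1.1} as stated a clean consequence of the method, and your remark that the per-site cost of $\Omega'(n)$ must be driven down from $\log N$ to $k^{-2}\log\Gamma_{k\times k}(\mathcal{U})\rightarrow h_{r}(\mathcal{U})$ correctly identifies where both hypotheses enter. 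The only cosmetic caveat: with the paper's one-sided definition (\ref{eqn:3.1})--(\ref{eqn:3.2}) of $\partial\mathbb{L}$, your claim that every straddling tile (and every point of $\Omega'(n)$ adjacent to $\Omega(n)$) meets $\partial\Omega(n)$ should be phrased for the symmetric boundary; this costs only a bounded factor, and the paper's own Lemma \ref{lemma:3.1} glosses over the same point.
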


Notably, (\ref{eqn:1.14}) and (\ref{eqn:1.15}) are geometrical conditions that apply for all shift spaces. Roughly, $\Omega(n)$ contains no lower-dimensional part whose size is comparable to that of its two-dimensional part. No condition on the shape of $\Omega$, unlike in the work of Ballister \emph{et al.} \cite{0}, is required. Whether (\ref{eqn:1.14}) alone can imply (\ref{eqn:1.16}), such that (\ref{eqn:1.15}) is unnecessary, is of interest. If $\mathcal{U}$ satisfies a certain mixing condition, then (\ref{eqn:1.14}) alone implies (\ref{eqn:1.16}). For example, when $\mathcal{U}$ is block gluing \cite{0-2,11}, a favorable result is obtained.

\begin{theorem}
\label{Theorem:1.2}
If $\mathcal{U}$ is a block gluing shift space and $\Omega$ satisfies (\ref{eqn:1.14}), then (\ref{eqn:1.16}) holds.
\end{theorem}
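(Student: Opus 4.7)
The upper bound $h_{\Omega}(\mathcal{U}) \leq h_{r}(\mathcal{U})$ is already provided by Theorem \ref{Theorem:1.1}, so the task reduces to establishing the reverse inequality $h_{\Omega}(\mathcal{U}) \geq h_{r}(\mathcal{U})$ under the block-gluing hypothesis. The plan is to pack $\Omega(n)$ with many $m\times m$ windows separated from one another by a buffer of width equal to the gluing distance, so that the admissible patterns in the windows may be chosen independently; then sending $m\to\infty$ and using (\ref{eqn:1.14}) to control the efficiency of the packing yields the desired bound.

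Let $g\geq 0$ be the gluing constant of $\mathcal{U}$, fix $m\geq 1$, and tile $\mathbb{Z}^{2}$ by pairwise disjoint squares $\{Q_{i}\}$ of side length $m+g$, each containing a concentric $m\times m$ sub-square $R_{i}$. Then any two distinct $R_{i}$'s are mutually separated by distance at least $g$. Call $Q_{i}$ \emph{interior} if $Q_{i}\subseteq\Omega(n)$, and let $\nu(n,m)$ denote the number of interior tiles. Iterating block gluing --- processing the interior $R_{i}$'s row by row, so that at every stage the accumulated pattern lives on a rectangle and is glued at distance $g$ to the next row's bounding strip --- shows that any independent assignment of a globally admissible pattern from $\mathcal{U}\mid_{\mathbb{Z}_{m\times m}}$ on each interior $R_{i}$ extends to some configuration in $\mathcal{U}$, hence
\begin{equation*}
\Gamma(\Omega(n),\mathcal{U}) \;\geq\; \Gamma_{m\times m}(\mathcal{U})^{\nu(n,m)}.
\end{equation*}
Since any tile meeting $\Omega(n)$ without being interior must contain a point of $\partial\Omega(n)$, and distinct tiles are disjoint, the number of non-interior tiles meeting $\Omega(n)$ is at most $|\partial\Omega(n)|$, so the uncovered portion of $\Omega(n)$ has cardinality at most $(m+g)^{2}|\partial\Omega(n)|$, giving
\begin{equation*}
\nu(n,m) \;\geq\; \frac{|\Omega(n)| - (m+g)^{2}|\partial\Omega(n)|}{(m+g)^{2}}.
\end{equation*}

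Combining the two displays, taking logarithms, dividing by $|\Omega(n)|$, and passing to $\limsup$ as $n\to\infty$, the hypothesis (\ref{eqn:1.14}) gives $h_{\Omega}(\mathcal{U}) \geq \frac{1}{(m+g)^{2}}\log\Gamma_{m\times m}(\mathcal{U})$. Finally, since $\frac{1}{m^{2}}\log\Gamma_{m\times m}(\mathcal{U})\to h_{r}(\mathcal{U})$ and $(m+g)^{2}/m^{2}\to 1$ as $m\to\infty$, sending $m\to\infty$ yields $h_{\Omega}(\mathcal{U}) \geq h_{r}(\mathcal{U})$, and (\ref{eqn:1.16}) follows. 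The main technical subtlety is the iterated use of block gluing, as the hypothesis is formulated only for pairs of rectangles; the $R_{i}$'s must be ordered carefully so that each intermediate gluing remains a rectangle-to-rectangle gluing at the admissible distance $g$. The geometric role of (\ref{eqn:1.14}) is to drive the uncovered fraction of $\Omega(n)$ to zero, replacing the rectangular-comparability condition (\ref{eqn:1.15}) required in Theorem \ref{Theorem:1.1}.
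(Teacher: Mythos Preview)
Your proof is correct and follows essentially the same approach as the paper's (Theorem~\ref{theorem:3.3}): tile $\mathbb{Z}^{2}$ by $(m+g)\times(m+g)$ blocks (the paper uses $k\times l$ blocks), fill the interior $m\times m$ (resp.\ $(k-M)\times(l-M)$) sub-squares independently via block gluing, and use (\ref{eqn:1.14}) through Lemma~\ref{lemma:3.1} to show that the fraction of $\Omega(n)$ covered by interior tiles tends to~$1$. Your treatment is in fact more careful than the paper's on the one genuinely delicate point---the iterated use of the two-rectangle gluing hypothesis---which the paper simply asserts, while you correctly indicate a row-by-row scheme that keeps every intermediate gluing rectangle-to-rectangle at distance~$\geq g$.
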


From Theorems \ref{Theorem:1.1} and \ref{Theorem:1.2}, some shift spaces will satisfy $h_{\Omega}(\mathcal{U})>h_{r}(\mathcal{U})$ only if (\ref{eqn:1.14}) is violated like $\Omega_{q}$, $q\geq 2$. Roughly, $\Omega(n)$ must contain a lower-dimensional part $\omega(n)$ whose size is comparable to that of its two-dimensional part, meaning that $|\omega(n)|/\left|\Omega(n)\setminus \omega(n)\right|$ is non-zero as $n\rightarrow\infty$. See Fig. 1.2 (c) with $\underset{n\rightarrow\infty}{\lim}\frac{|\omega(n)|}{n^{2}}>0$.

Some notations must be introduced before the results herein can be presented. Given a finite lattice $\mathbb{L}\subset\mathbb{Z}^{2}$, for $m\geq 1$, a point $(i,j)\in \mathbb{L}$ has horizontal length $m$ in $\mathbb{L}$ if
$m$ is the largest positive integer such that there exists a $m\times 1$ rectangular lattice in $\mathbb{L}$ that contains $(i,j)$.
 %

Let $\Omega=\left\{\Omega(n)\right\}_{n=1}^{\infty}$. 
%
For $m\geq 1$, define the subset $\Omega_{m}^{(h)}(n)$ of $\omega(n)$ with horizontal length $m$ by

\begin{equation}\label{eqn:4.1}
\Omega_{m}^{(h)}(n)=\left\{(i,j) \in\Omega(n) \hspace{0.1cm}\mid \hspace{0.1cm}  (i,j) \text{ has horizontal length }m \text{ in }\Omega(n) \right\}.
\end{equation}
%
Denote by $\beta_{m}^{(h)}(n)=\left|\Omega_{m}^{(h)}(n) \right|$. The subset $\Omega_{m}^{(v)}(n)$ of $\omega(n)$ with vertical length $m$ can be similarly defined. Denote by $\beta_{m}^{(v)}(n)=\left|\Omega_{m}^{(v)}(n) \right|$.

\begin{theorem}
\label{Theorem:1.3}
If there exists $m\geq 1$ such that

\begin{equation}\label{eqn:1.17}
\underset{n\rightarrow\infty}{\limsup} \frac{\beta_{m}^{(h)}(n)}{|\Omega(n)|}>0
\end{equation}
or
\begin{equation}\label{eqn:1.17-1}
\underset{n\rightarrow\infty}{\limsup} \frac{\beta_{m}^{(v)}(n)}{|\Omega(n)|}>0,
\end{equation}
then there exists an additive shift of finite type $\mathcal{U}$ such that
\begin{equation}\label{eqn:1.18}
h_{\Omega}(\mathcal{U})>h_{r}(\mathcal{U}).
\end{equation}

\end{theorem}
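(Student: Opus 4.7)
My plan is to adapt the computation underlying (\ref{eqn:1.13}) from the introduction. I would treat only the case when (\ref{eqn:1.17}) holds; the case (\ref{eqn:1.17-1}) is symmetric, obtained by interchanging rows and columns. As witnessing shift, take $\mathcal{U} \subset \{0,1\}^{\mathbb{Z}^{2}}$ to be the additive SFT defined by the single constraint $x_{i,j}x_{i+1,j} = 0$, i.e., the golden-mean shift horizontally, with no vertical constraint. By (\ref{eqn:1.12}), this choice already gives $h_{r}(\mathcal{U}) = \log g$ with $g = (1+\sqrt{5})/2$.

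The key step will be to establish the factorization
\[
\Gamma(\Omega(n),\mathcal{U}) = \prod_{m' \geq 1} a_{m'}^{\,\beta_{m'}^{(h)}(n)/m'},
\]
where $a_{m'}$ is the sequence $a_{1}=2$, $a_{2}=3$, $a_{k} = a_{k-1}+a_{k-2}$ for $k\geq 3$. This should hold because (i) $\mathcal{U}$ places no constraint linking distinct rows, so row configurations are independent; (ii) within a single row, $\Omega(n)$ decomposes into a disjoint union of maximal horizontal segments, and any collection of locally golden-mean admissible words on these segments can be amalgamated into a single globally admissible row on $\mathbb{Z}$ by filling the intervening gaps with $0$'s. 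Each maximal segment of length $m'$ then contributes exactly $a_{m'}$ admissible words and accounts for $m'$ points of $\beta_{m'}^{(h)}(n)$, so there are $\beta_{m'}^{(h)}(n)/m'$ such segments.

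Once the factorization is in hand, the rest is a short estimate. Dividing the logarithm of the product by $|\Omega(n)| = \sum_{m'} \beta_{m'}^{(h)}(n)$ and subtracting $\log g$ gives
\[
\frac{\log \Gamma(\Omega(n),\mathcal{U})}{|\Omega(n)|} - \log g = \sum_{m' \geq 1}\frac{\beta_{m'}^{(h)}(n)}{|\Omega(n)|}\left(\frac{\log a_{m'}}{m'} - \log g\right).
\]
Binet's formula identifies $a_{m'}$ with the Fibonacci number $F_{m'+2}$ and yields $a_{m'} > g^{m'}$ for every $m' \geq 1$, so each $\delta_{m'} := (\log a_{m'})/m' - \log g$ is strictly positive. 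Retaining only the term $m' = m$ furnished by (\ref{eqn:1.17}) and passing to the $\limsup$ along a subsequence realizing the positive value $\alpha$ of that limsup gives $h_{\Omega}(\mathcal{U}) \geq \log g + \alpha\, \delta_{m} > \log g = h_{r}(\mathcal{U})$, which is (\ref{eqn:1.18}).

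The hard part will be the factorization claim: it must be a genuine equality rather than merely a product upper bound. What makes the argument work for this particular $\mathcal{U}$ is that a $0$ placed between two admissible segments is always legal, so the amalgamation is automatic; a shift with, e.g., a diagonal or asymmetric forbidden pattern could fail here and would force a much more delicate counting argument. The vertical case (\ref{eqn:1.17-1}) is handled by replacing $\mathcal{U}$ with its $90^{\circ}$ rotation (forbidding $x_{i,j}x_{i,j+1} = 11$) and repeating the argument with $\beta_{m}^{(v)}$ in place of $\beta_{m}^{(h)}$.
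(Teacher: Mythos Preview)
Your proposal is correct and follows essentially the same route as the paper's proof (Theorem~\ref{theorem:4.1}): the same witnessing shift $\mathcal{U}_{\mathcal{B}}$ (horizontal golden mean, no vertical constraint), the same per-segment count $a_{m'}=F_{m'+2}>g^{m'}$, and the same extraction of a strictly positive excess from the $m$-segments furnished by (\ref{eqn:1.17}). The only difference is presentational: you write out the exact factorization $\Gamma(\Omega(n),\mathcal{U})=\prod_{m'}a_{m'}^{\beta_{m'}^{(h)}(n)/m'}$, whereas the paper packages this same computation into the blanket inequality $\frac{1}{|\mathbb{L}|}\log\Gamma_{\mathbb{L}}(\mathcal{U}_{\mathcal{B}})\geq\log g$ for arbitrary $\mathbb{L}$ and then isolates the contribution of $\Omega_{m}^{(h)}(n)$.
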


Notably, condition (\ref{eqn:1.17}) or (\ref{eqn:1.17-1}) implies that $\Omega(n)$ contains a lower-dimensional part whose size is comparable to that of its two-dimensional part.

Theorem \ref{Theorem:1.3} can be extended when $\mathcal{U}$ satisfies a certain mixing condition and $\Omega$ contains some non-negligible lower-dimensional parts, as follows. The growth rate of the lower-dimensional parts must be demonstrated. Johnson \emph{et al.} \cite{30} previously introduced the projectional entropy $h_{\mathbb{L}}(\mathcal{U})$ of a $d$-dimensional shift space $\mathcal{U}$, $d\geq2$, where $\mathbb{L}$ is an $r$-dimensional sublattice of $\mathbb{Z}^{d}$, $1\leq r <d$. Moreover, those authors proved

\begin{equation}\label{eqn:1.20-0}
h_{\mathbb{L}}(\mathcal{U})\geq h_{top}(\mathcal{U}),
\end{equation}
where $h_{top}(\mathcal{U})$ is the topological entropy of $\mathcal{U}$. Notably,

\begin{equation}\label{eqn:1.20-1}
h_{top}(\mathcal{U})=h_{r}(\mathcal{U}).
\end{equation}

For any $\mathbb{Z}^{2}$ shift space $\mathcal{U}$, let $\hat{h}^{(1)}(\mathcal{U})$ be the supremum of projectional entropy for all one-dimensional sublattices:

\begin{equation}\label{eqn:1.19-4-1}
\hat{h}^{(1)}(\mathcal{U})=\sup \left\{h_{\mathbb{L}}(\mathcal{U})  \hspace{0.1cm} \mid \hspace{0.1cm} \mathbb{L} \text{ is an one-dimensional sublattice} \right\}.
\end{equation}
Clearly,

\begin{equation}\label{eqn:1.20-2}
\hat{h}^{(1)}(\mathcal{U})\geq h_{r}(\mathcal{U}).
\end{equation}
Then, the following can be established.

\begin{theorem}
\label{Theorem:1.4}
Let $\mathcal{U}$ be a block gluing $\mathbb{Z}^{2}$ shift space. If

\begin{equation}\label{eqn:1.20-3}
 h_{r}(\mathcal{U}) < \hat{h}^{(1)}(\mathcal{U}),
\end{equation}
then for any $h\in\left[h_{r}(\mathcal{U}),\hat{h}^{(1)}(\mathcal{U})\right)$, there exists $\Omega=\{\Omega(n)\}_{n=1}^{\infty}$ such that

\begin{equation}\label{eqn:1.20-4}
h_{\Omega}(\mathcal{U})=h.
\end{equation}
Furthermore, if $\hat{h}^{(1)}$ can be attained by some one-dimensional sublattice $\mathbb{L}'$, then there exists $\Omega=\{\Omega(n)\}_{n=1}^{\infty}$ such that
\begin{equation}\label{eqn:1.20-5}
h_{\Omega}(\mathcal{U})=\hat{h}^{(1)}=h_{\mathbb{L}'}(\mathcal{U}).
\end{equation}
\end{theorem}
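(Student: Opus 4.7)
The plan is to realize the target entropy $h$ by constructing $\Omega(n)$ as the union of a two-dimensional square $R_n$ (contributing at rate $h_r(\mathcal{U})$ per site) and a one-dimensional stick $S_n$ along a one-dimensional sublattice $\mathbb{L}$ whose projectional entropy is close to or equal to $\hat{h}^{(1)}(\mathcal{U})$ (contributing at rate $h_\mathbb{L}(\mathcal{U})$ per site). Block gluing will be invoked to show that the pattern count on $R_n \cup S_n$ factors as $\Gamma(R_n, \mathcal{U}) \cdot \Gamma(S_n, \mathcal{U})$, so that $h_\Omega(\mathcal{U})$ becomes a weighted average of $h_r(\mathcal{U})$ and $h_\mathbb{L}(\mathcal{U})$; by tuning the size ratio $|S_n|/|R_n|$ one can hit any value in $[h_r(\mathcal{U}), h_\mathbb{L}(\mathcal{U}))$, and by letting the ratio diverge one reaches $h_\mathbb{L}(\mathcal{U})$ itself.

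Fix $h \in [h_r(\mathcal{U}), \hat{h}^{(1)}(\mathcal{U}))$, choose a one-dimensional sublattice $\mathbb{L}$ with $h_\mathbb{L}(\mathcal{U}) > h$, and set $\alpha = (h - h_r(\mathcal{U}))/(h_\mathbb{L}(\mathcal{U}) - h) \in [0,\infty)$. Let $g$ denote the block gluing gap of $\mathcal{U}$. The system $\Omega$ is built by interleaving two types of stages: at an odd index $n = 2k-1$, set
$$\Omega(n) = R_{m_k} \cup S_{\ell_k},$$
where $R_{m_k}$ is an $m_k \times m_k$ square centered at the origin and $S_{\ell_k}$ is a segment of $\mathbb{L}$ of length $\ell_k = \lfloor \alpha\, m_k^2 \rfloor$ placed at distance at least $g$ from $R_{m_k}$; at an even index $n = 2k$, let $\Omega(n) = R_{M_k}$ be a square large enough to contain $\Omega(2k-1)$. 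Taking $m_{k+1} > M_k$ enforces $\Omega(n) \subset \Omega(n+1)$, and $\bigcup_n \Omega(n) = \mathbb{Z}^2$ because $M_k \to \infty$. Block gluing together with the $g$-gap shows that every pair of admissible patterns on $R_{m_k}$ and $S_{\ell_k}$ comes from a common $x \in \mathcal{U}$, while the reverse bound is automatic from restriction, so
$$\Gamma(\Omega(2k-1), \mathcal{U}) = \Gamma(R_{m_k}, \mathcal{U}) \cdot \Gamma(S_{\ell_k}, \mathcal{U}).$$
Dividing logs by $m_k^2 + \ell_k$ and using $\log \Gamma(R_{m_k},\mathcal{U})/m_k^2 \to h_r(\mathcal{U})$ and $\log \Gamma(S_{\ell_k},\mathcal{U})/\ell_k \to h_\mathbb{L}(\mathcal{U})$, the odd-stage entropy tends to $(h_r(\mathcal{U}) + \alpha h_\mathbb{L}(\mathcal{U}))/(1+\alpha) = h$, while the even-stage entropy tends to $h_r(\mathcal{U}) \leq h$. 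Hence $h_\Omega(\mathcal{U}) = h$.

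For the furthermore part, keep $\mathbb{L} = \mathbb{L}'$ fixed and replace the constant $\alpha$ by a diverging sequence $\alpha_k \to \infty$ (for instance $\alpha_k = k$) in the stick length at odd stages. The odd-stage entropy then tends to $h_{\mathbb{L}'}(\mathcal{U}) = \hat{h}^{(1)}(\mathcal{U})$, which still dominates the even-stage limit $h_r(\mathcal{U})$ in the $\limsup$. The chief obstacle is reconciling the nesting $\Omega(n) \subset \Omega(n+1)$, the exhaustion $\bigcup_n \Omega(n) = \mathbb{Z}^2$, and the fixed $g$-separation between square and stick at every odd stage; the interleaving device above resolves this by letting each even-stage square absorb the previous stick before the next stick is planted farther out. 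A secondary technical point is that block gluing is usually stated for rectangles, so applying it to the one-dimensional $S_{\ell_k}$ requires either restricting to an axis-aligned $\mathbb{L}$ or first thickening $S_{\ell_k}$ into a parallelogram, gluing, and then restricting; the subexponential corrections this introduces do not affect the $\limsup$.
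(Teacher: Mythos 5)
Your proposal follows essentially the same route as the paper: take $\Omega(n)$ to be a square (contributing $h_{r}(\mathcal{U})$) together with a segment of a one-dimensional sublattice $\mathbb{L}$ with $h_{\mathbb{L}}(\mathcal{U})>h$ (contributing $h_{\mathbb{L}}(\mathcal{U})$), use block gluing to make the pattern count multiply, and tune the size ratio to hit the weighted average $h$, letting the ratio diverge for the supremum case. Your added bookkeeping (the interleaving of stages to preserve nesting and exhaustion, the explicit $g$-separation, and the thickening of the stick so that block gluing applies) only makes rigorous the steps the paper dispatches with ``it can be easily verified.''
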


Condition (\ref{eqn:1.20-3}) has been discussed elsewhere \cite{30} as described in Section 4.

 With respect to Theorems \ref{Theorem:1.1}$\sim$\ref{Theorem:1.4}, $\Omega=\{\Omega(n)\}_{n=1}^{\infty}$ is called a genuinely two-dimensional approximation to $\mathbb{Z}^{2}$ if $\Omega$ satisfies (\ref{eqn:1.0-1}), (\ref{eqn:1.0-2}) and (\ref{eqn:1.14}). Therefore, the genuinely two-dimensional spatial entropy can be defined as follows.

\begin{equation}\label{eqn:1.20}
h(\mathcal{U})=\sup\left\{h_{\Omega}(\mathcal{U}) \hspace{0.1cm} \mid\hspace{0.1cm} \Omega \text{ satisfies (\ref{eqn:1.0-1}), (\ref{eqn:1.0-2}) and (\ref{eqn:1.14}) }   \right\},
\end{equation}
so $h(\mathcal{U})$ measures the maximum growth rate of admissible local patterns for all genuinely two-dimensional approximations to $\mathbb{Z}^{2}$. Clearly,  Theorem \ref{Theorem:1.1} implies

\begin{equation}\label{eqn:1.21}
h_{r}(\mathcal{U})=h(\mathcal{U}).
\end{equation}
Therefore, $h_{r}(\mathcal{U})$ can be appropriately said to be the two-dimensional spatial entropy.

No shift space $\mathcal{U}$ and $\Omega$ that satisfy

\begin{equation}\label{eqn:1.19-5}
h_{\Omega}(\mathcal{U})<h_{r}(\mathcal{U})
\end{equation}
has yet been found. The problem of whether or not the rectangular spatial entropy is the minimum entropy, i.e., whether or not

\begin{equation}\label{eqn:1.22}
h_{r}(\mathcal{U})=\inf \left\{ h_{\Omega}(\mathcal{U}) \hspace{0.1cm} \mid\hspace{0.1cm} \Omega \text{ satisfies (\ref{eqn:1.0-1}) and (\ref{eqn:1.0-2})  } \right\},
\end{equation}
needs further investigation.

The two-dimensional results can be generalized to higher-dimensional cases on $\mathbb{Z}^{d}$, $d\geq 3$. The derails are omitted for brevity.

The rest of this paper is arranged as follows. Section 2 introduces some useful notation and considers $h_{r}(\mathcal{U})$ in more detail. Section 3 proves a general version of Theorems
\ref{Theorem:1.1} and \ref{Theorem:1.2}. Section 4 proves Theorems \ref{Theorem:1.3} and  \ref{Theorem:1.4}.

\section{Rectangular entropy}

\hspace{0.5cm} This section introduces notation that will be useful in considering $h_{\Omega}(\mathcal{U})=h_{r}(\mathcal{U})$ in Section 3, and it presents some properties of rectangular entropy.

Firstly, the notation is introduced. Let $\mathcal{A}=\{0,1,\cdots,N-1\}$, $N\geq 2$. For any vector $\mathbf{v}\in \mathbb{Z}^{2}$, the shift map $\sigma^{\mathbf{v}}: \mathcal{A}^{\mathbb{Z}^{2}}\rightarrow \mathcal{A}^{\mathbb{Z}^{2}}$ is defined by

\begin{equation*}
\left(\sigma^{\mathbf{v}}(x)\right)_{i,j}=x_{(i,j)+\mathbf{v}}
\end{equation*}
for all $(i,j)\in \mathbb{Z}^{2}$. An additive shift space $\mathcal{U}\subseteq \mathcal{A}^{\mathbb{Z}^{2}}$ is transitive variant and closed. Equivalently, an additive shift space can be defined by a forbidden set as follows.
Let forbidden set

\begin{equation*}
\mathcal{F}=\underset{\mathbb{S}\in\mathcal{S}}{\bigcup} \hspace{0.15cm} \mathcal{A}^{\mathbb{L}},
\end{equation*}
where $\mathcal{S}$ is the set of shapes $\mathbb{\mathbb{S}}\subset\mathbb{Z}^{2}$, $|\mathbb{S}|< \infty$. Then, the additive shift space $\mathcal{U}=\mathcal{U}_{\mathcal{F}}$ of $\mathcal{F}$ is given by

\begin{equation*}
 \mathcal{U}_{\mathcal{F}}=\left\{x\in \mathcal{A}^{\mathbb{Z}^{2}} \hspace{0.1cm} \mid \hspace{0.1cm} \sigma^{\mathbf{v}}(x)\mid_{\mathbb{S}}\notin \mathcal{F} \text{ for all }\mathbb{L}\in\mathcal{S} \text{ and }\mathbf{v\in\mathbb{Z}^{2}} \right\}.
\end{equation*}
$\mathcal{U}_{\mathcal{F}}$ is called a shift of finite type if $|\mathcal{F}|< \infty$.

Given an additive shift $\mathcal{U}$, for $\mathbb{L}\subseteq \mathbb{Z}^{2}$, the set of admissible patterns on $\mathbb{L}$ is defined by
\begin{equation*}
\Sigma_{\mathbb{L}}(\mathcal{U})=\mathcal{U}\mid_{\mathbb{L}}=\left\{x\mid_{\mathbb{L}}\hspace{0.1cm} \mid \hspace{0.1cm} x\in\mathcal{U} \right\}.
\end{equation*}
Denote by $\Gamma(\mathbb{L},\mathcal{U})=\left| \Sigma_{\mathbb{L}}(\mathcal{U}) \right|$  the cardinal number of $\Sigma_{\mathbb{L}}(\mathcal{U})$.

For any $m,n\geq 1$ and $(i,j)\in\mathbb{Z}^{2}$, the $m\times n$ rectangular lattice with the left-bottom vertex $(i,j)$ is denoted by

\begin{equation*}
\mathbb{Z}_{m\times n}((i,j))=\left\{(i+n_{1},j+n_{2})\mid 0\leq n_{1}\leq m-1,0\leq n_{2}\leq n-1 \right\}.
\end{equation*}
In particular,

\begin{equation*}
\mathbb{Z}_{m\times n}=\mathbb{Z}_{m\times n}((0,0)).
\end{equation*}
Then, let

\begin{equation*}
\Sigma_{m\times n}(\mathcal{U}) = \Sigma_{\mathbb{Z}_{m\times n}}(\mathcal{U})
\end{equation*}
and

\begin{equation*}
\Gamma_{m\times n}(\mathcal{U}) = \Gamma_{\mathbb{Z}_{m\times n}}(\mathcal{U}).
\end{equation*}
For a additive shift $\mathcal{U}$, it is well-known that
\begin{equation*}
  h_{r}(\mathcal{U})= \underset{m,n\geq 1}{\inf} \frac{1}{mn}\log \Gamma_{m\times n}(\mathcal{U})
\end{equation*}
by the sub-additive property of $\log \Gamma_{m\times n}(\mathcal{U})$; see \cite{12}.

A finite subset $\mathbb{T}\subset\mathbb{Z}^{2}$ is called a tessellation of $\mathbb{Z}^{2}$ if there exists a sequence
$\mathbf{v}_{1},\mathbf{v}_{2},\cdots$ in $\mathbb{Z}^{2}$ such that $\underset{i=1}{\overset{\infty}{\bigcup}}\hspace{0.1cm}\mathbb{T}+\mathbf{v}_{i}$ is a partition of $\mathbb{Z}^{2}$. For example, $\mathbb{T}$ can be a rectangle, a parallelogram, or an L-shaped lattice; see Fig. 2.1.

\begin{equation*}
 \includegraphics[scale=0.4]{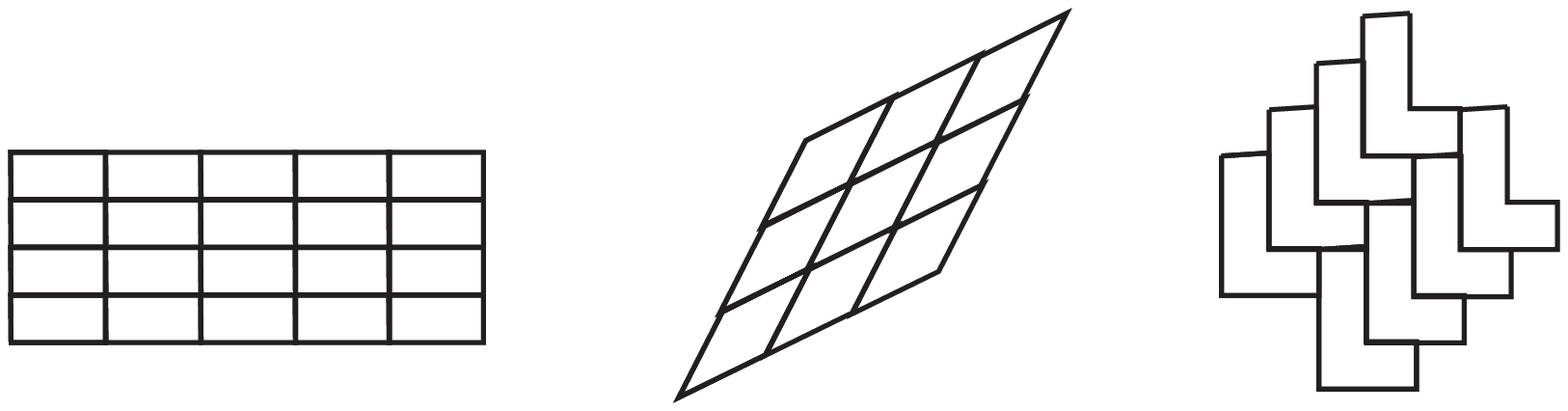}
\end{equation*}
\begin{equation*}
\text{Figure 2.1.}
\end{equation*}
Ballister \emph{et al.} \cite{0} proved

\begin{equation}\label{eqn:2.0-1}
\frac{1}{|\mathbb{T}|}\log \Gamma_{\mathbb{T}}(\mathcal{U})\geq h_{r}(\mathcal{U})
\end{equation}
for any additive shift $\mathcal{U}$.

Now, some mixing properties and notation are introduced for use in the examination of $h_{\Omega}(\mathcal{U})=h_{r}(\mathcal{U})$ in Section 3. Let $d$ be the Euclidean metric in $\mathbb{Z}^{2}$.
A shift space $\mathcal{U}$ is called block gluing if a number $M(\Sigma)\geq 1$ exists such that for any two allowable patterns $U_{1}\in \mathcal{U}\mid_{\mathbb{R}_{1}}$ and $U_{2}\in\mathcal{U}\mid_{\mathbb{R}_{2}}$ with $d(\mathbb{R}_{1},\mathbb{R}_{2})\geq M$, where $\mathbb{R}_{1}=\mathbb{Z}_{m_{1}\times n_{1}}((i_{1},j_{1}))$ and  $\mathbb{R}_{2}=\mathbb{Z}_{m_{2}\times n_{2}}((i_{2},j_{2}))$, $m_{l},n_{l}\geq1 $ and $(i_{l},j_{l})\in\mathbb{Z}^{2}$, $l\in\{1,2\}$, there exists a global pattern $W\in\mathcal{U}$ with $W\mid_{\mathbb{R}_{1}}=U_{1}$ and $W\mid_{\mathbb{R}_{2}}=U_{2}$; see \cite{11}.

In particular,
a shift space $\mathcal{U}$ is called horizontally block gluing if only $\mathbb{R}_{1}=\mathbb{Z}_{m_{1}\times n_{1}}((i_{1},j))$ and  $\mathbb{R}_{2}=\mathbb{Z}_{m_{2}\times n_{2}}((i_{2},j))$ is considered. Similarly,
$\mathcal{U}$ is called vertically block gluing if only $\mathbb{R}_{1}=\mathbb{Z}_{m_{1}\times n_{1}}((i,j_{1}))$ and  $\mathbb{R}_{2}=\mathbb{Z}_{m_{2}\times n_{2}}((i,j_{2}))$ is considered.

A sequence of finite lattices $\Omega=\left\{ \Omega(n)\right\}_{n=1}^{\infty}$ is called horizontally (or vertically) decomposable if there exists $m\geq 1$ such that $\Omega(n)$ can be decomposed into $m$ disjoint rectangular lattices by cutting $\Omega(n)$ along horizontal (or vertical) lines for all $n\geq 1$.

\begin{example}
\label{example:1.1}
Consider $\Omega=\left\{ \Omega(n)\right\}_{n=2}^{\infty}$, where $\Omega(n)$ is described in Fig. 2.2. Clearly, $\Omega$ is horizontally decomposable.
\begin{equation*}
\begin{array}{ccccccccc}
\psfrag{b}{{\footnotesize$n^{2}$}}
\psfrag{a}{$n$}
\psfrag{c}{{\footnotesize cutting line}}
\includegraphics[scale=1.0]{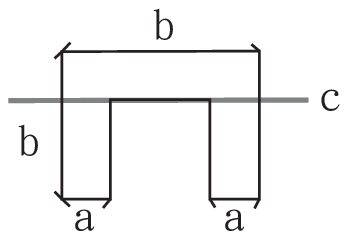}
&
&
&
& & & &
\psfrag{b}{{\footnotesize$n^{2}$}}
\psfrag{a}{$n$}
\psfrag{c}{{\footnotesize cutting line} }
\hspace{1.0cm}\includegraphics[scale=1.0]{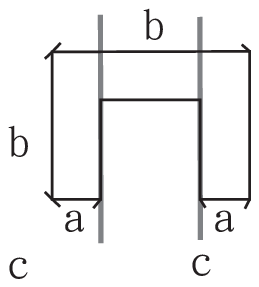}
\end{array}
\end{equation*}
\begin{equation*}
\text{Figure 2.2.}
\end{equation*}

\end{example}

The following proposition shows that the rectangular entropy is always strictly smaller than $\frac{1}{mn}\log \Gamma_{m\times n}(\mathcal{U})$, $m,n\geq 1$, except for full shifts.

\begin{proposition}
\label{proposition:1.1}
For any nonempty additive shift space $\mathcal{U}$. If

\begin{equation}\label{eqn:2.1}
h_{r}(\mathcal{U})=\frac{1}{mn}\log \Gamma_{m\times n}(\mathcal{U})
\end{equation}
for some $m,n\geq 1$, then $\mathcal{U}$ is a full shift. In particular, if $\mathcal{U}$ is not a full shift, then

\begin{equation}\label{eqn:2.1-1}
h_{r}(\mathcal{U})<\frac{1}{mn}\log \Gamma_{m\times n}(\mathcal{U})
\end{equation}
for any $m,n\geq 1$.

\end{proposition}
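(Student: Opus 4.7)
The key observation is Fekete's lemma, as already noted in the paper: the subadditivity of $\log\Gamma_{m\times n}(\mathcal{U})$ in each variable gives $h_{r}(\mathcal{U})=\inf_{m,n\geq 1}\frac{1}{mn}\log\Gamma_{m\times n}(\mathcal{U})$, so hypothesis (\ref{eqn:2.1}) asserts that this infimum is attained at some $(m,n)$. A standard Fekete argument then produces $\Gamma_{km\times ln}(\mathcal{U})=\Gamma_{m\times n}(\mathcal{U})^{kl}$ for every $k,l\geq 1$: any tiling of $\mathbb{Z}_{km\times ln}$ by the obvious $m\times n$ blocks admits completely independent admissible choices.

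The heart of the argument is a \emph{shift-by-one} trick. Tile $\mathbb{Z}_{2m\times n}$ by two independent admissible $m\times n$ tiles and restrict to the shifted copy $\mathbb{Z}_{m\times n}((1,0))$, which occupies columns $1,\dots,m-1$ of the first tile and column $0$ of the second. By shift-invariance the patterns appearing on $\mathbb{Z}_{m\times n}((1,0))$ form a translate of $\Sigma_{m\times n}(\mathcal{U})$, of cardinality $\Gamma_{m\times n}(\mathcal{U})$; on the other hand, the projections $\Sigma_{m\times n}(\mathcal{U})\to\Sigma_{(m-1)\times n}(\mathcal{U})$ and $\Sigma_{m\times n}(\mathcal{U})\to\Sigma_{1\times n}(\mathcal{U})$ onto the relevant sub-regions are surjective (by a standard shift-invariant extension argument), so independent variation of the two tiles produces $\Gamma_{(m-1)\times n}(\mathcal{U})\cdot\Gamma_{1\times n}(\mathcal{U})$ distinct patterns, as the two partial tiles occupy disjoint pieces of $\mathbb{Z}_{m\times n}((1,0))$. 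Therefore $\Gamma_{m\times n}=\Gamma_{(m-1)\times n}\cdot\Gamma_{1\times n}$. Substituting the infimum bounds $\log\Gamma_{(m-1)\times n}\geq(m-1)n\,h_{r}$ and $\log\Gamma_{1\times n}\geq n\,h_{r}$ into the identity $mn\,h_{r}=\log\Gamma_{(m-1)\times n}+\log\Gamma_{1\times n}$ forces both to be equalities, so the infimum is also attained at $(1,n)$. Applying the same trick vertically at $(1,n)$ (tile $\mathbb{Z}_{1\times 2n}$ and restrict to $\mathbb{Z}_{1\times n}((0,1))$) yields $\Gamma_{1\times n}=\Gamma_{1\times(n-1)}\cdot\Gamma_{1\times 1}$ and forces attainment at $(1,1)$. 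Writing $\mathcal{A}_{0}\subseteq\mathcal{A}$ for the set of symbols appearing in $\mathcal{U}$, this reads $h_{r}(\mathcal{U})=\log\Gamma_{1\times 1}(\mathcal{U})=\log|\mathcal{A}_{0}|$.

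Combining $h_{r}(\mathcal{U})=\log|\mathcal{A}_{0}|$ with the hypothesis gives $\Gamma_{m\times n}(\mathcal{U})=|\mathcal{A}_{0}|^{mn}$, so every $\mathcal{A}_{0}$-valued pattern on $\mathbb{Z}_{m\times n}$ is admissible; the Fekete cascade upgrades this to $\Gamma_{km\times ln}(\mathcal{U})=|\mathcal{A}_{0}|^{kmln}$ for every $k,l\geq 1$. Given an arbitrary $\mathbb{Z}_{p\times q}$, choose $k,l$ with $km\geq p$ and $ln\geq q$; any $\mathcal{A}_{0}$-pattern on $\mathbb{Z}_{p\times q}$ extends arbitrarily to an admissible $\mathcal{A}_{0}$-pattern on $\mathbb{Z}_{km\times ln}$ and restricts back to the original, so $\Sigma_{p\times q}(\mathcal{U})=\mathcal{A}_{0}^{\mathbb{Z}_{p\times q}}$ for all $p,q$. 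Closedness of $\mathcal{U}$ then forces $\mathcal{U}=\mathcal{A}_{0}^{\mathbb{Z}^{2}}$, a full shift. The "in particular" assertion (\ref{eqn:2.1-1}) is the contrapositive combined with the universal bound $h_{r}(\mathcal{U})\leq\frac{1}{mn}\log\Gamma_{m\times n}(\mathcal{U})$, which promotes inequality to strict inequality.

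The principal obstacle is recognising the need for the shift-by-one trick: subadditivity and the infimum characterisation alone give $h_{r}\leq\frac{1}{n}\log\Gamma_{1\times n}$ in the same direction on both sides, so no Fekete-style cascade reaches $(1,1)$ from $(m,n)$ without exploiting shift-invariance on a region that straddles two adjacent tiles. Once the identity $\Gamma_{m\times n}=\Gamma_{(m-1)\times n}\cdot\Gamma_{1\times n}$ is in hand, the remainder is a short counting descent.
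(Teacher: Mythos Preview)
Your proof is correct and takes a genuinely different route from the paper's. Both arguments begin the same way: subadditivity together with attainment of the infimum at $(m,n)$ forces $\Gamma_{\alpha m\times\beta n}=\Gamma_{m\times n}^{\alpha\beta}$, i.e.\ the $m\times n$ blocks tile independently. From there the paper argues by contradiction on the forbidden set: it assumes a minimal forbidden pattern $U$ on some shape $\mathbb{S}$, cuts $\mathbb{S}$ into two nonempty pieces $\mathbb{S}_{1},\mathbb{S}_{2}$, uses minimality to embed each $U|_{\mathbb{S}_{i}}$ into a separate admissible $am\times bn$ block, and then invokes the independence property to tessellate the two blocks side by side, re-creating $U$ inside an element of $\mathcal{U}$---a contradiction. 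Your argument instead stays on the counting side: the shift-by-one trick (restricting the independently tiled $2m\times n$ block to the offset window $\mathbb{Z}_{m\times n}((1,0))$) yields $\Gamma_{m\times n}=\Gamma_{(m-1)\times n}\cdot\Gamma_{1\times n}$, which cascades the attainment down to $(1,1)$ and gives $h_{r}=\log|\mathcal{A}_{0}|$; the trivial upper bound $\Gamma_{p\times q}\leq|\mathcal{A}_{0}|^{pq}$ combined with the infimum characterisation then makes every rectangle full, and closedness finishes. The paper's approach is more structural and immediately suggests the tessellation generalisation of Remark~\ref{remark:2.4}; your approach avoids the forbidden-set formalism entirely and extracts the explicit value $h_{r}=\log|\mathcal{A}_{0}|$ along the way. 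One small simplification: once you have $h_{r}=\log|\mathcal{A}_{0}|$, you do not need the Fekete cascade to reach arbitrary $(p,q)$---the infimum bound $\frac{1}{pq}\log\Gamma_{p\times q}\geq h_{r}$ already gives $\Gamma_{p\times q}\geq|\mathcal{A}_{0}|^{pq}$ directly for every $p,q\geq 1$.
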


\begin{proof}
Let $\mathcal{U}=\mathcal{U}_{F}\subseteq \mathcal{A}^{\mathbb{Z}^{2}}$ for some $\mathcal{F}=\underset{\mathbb{S}\in\mathcal{S}}{\bigcup} \hspace{0.15cm} \mathcal{A}^{\mathbb{S}}$. Without loss of generality, $\mathcal{F}$ is assumed to be simplified, that is, $\mathcal{F}$ satisfies the following two conditions.
\begin{enumerate}

\item[(i)] $\mathcal{A}=\{0,1,\cdots,N-1\}$, for some $N\geq 2$, is the smallest alphabet for $\mathcal{U}$,

\item[(ii)] if $U$ is a forbidden pattern on $\mathbb{S}\in\mathcal{S}$, then for any $\mathbb{S}'\subsetneq\mathbb{S}$, there exists $x\in \mathcal{U}$ such that
$x\mid_{\mathbb{S}'}=U\mid_{\mathbb{S}'}$.

\end{enumerate}

By the subadditive property, we have

\begin{equation*}
 \Gamma_{\alpha m\times \beta n}(\mathcal{U})\leq \Gamma_{m\times n}^{\alpha\beta}(\mathcal{U})
\end{equation*}
for all $\alpha,\beta \geq 1$. Then, from (\ref{eqn:2.1}), it can be verified that

\begin{equation}\label{eqn:2.2}
 \Gamma_{\alpha m\times \beta n}(\mathcal{U})= \Gamma_{m\times n}^{\alpha\beta}(\mathcal{U})
\end{equation}
for all  $\alpha,\beta \geq 1$. This means that for any two patterns $U_{1}$ and $U_{2}$ in $\Sigma_{k m\times l n}(\mathcal{U})$, $k,l\geq 1$, they can be tessellated together in horizontal or vertical direction to be a pattern in $\Sigma_{2 k m\times l n}(\mathcal{U})$ or $\Sigma_{k m\times 2 l n}(\mathcal{U})$, respectively.

We will prove $\mathcal{F}=\emptyset$ by contradiction. Suppose there exists a forbidden pattern $U\in\mathcal{F}$ on $\mathbb{S}$ and $\mathbb{S}\subseteq \mathbb{Z}_{a m \times bn}$ for some $a,b\geq 1$. From condition (i), $\mathbb{S}\neq \mathbb{Z}_{1\times 1}$. Then, by cutting $\mathbb{S}$ along a horizontal or vertical line, $\mathbb{S}$ can be decomposed as $\mathbb{S}=\mathbb{S}_{1}\cup \mathbb{S}_{2}$ where $\mathbb{S}_{1}$ and $\mathbb{S}_{2}$ are not empty and $\mathbb{S}_{1}\cap \mathbb{S}_{2}=\emptyset$.
Here, only the case by cutting along a horizontal line such that $\mathbb{S}_{1}$ is on the left of $\mathbb{S}_{2}$ is considered. Similarly, the other cases can be proven. Then, there exists $(i,j)\in\mathbb{Z}^{2}$ such that $\mathbb{S}_{1}\subseteq\mathbb{Z}_{a m \times bn}((i,j))$ and $\mathbb{S}_{2}\subseteq\mathbb{Z}_{a m \times bn}((i+am,j+bn))$; see Fig. 2.3.

\begin{equation*}
\psfrag{a}{$\mathbb{S}_{1}$ }
\psfrag{b}{$\mathbb{S}_{2}$ }
\psfrag{c}{{\scriptsize$\mathbb{Z}_{a m \times bn}((i,j))$ }}
\psfrag{d}{{\scriptsize$\mathbb{Z}_{a m \times bn}((i+am,j+bn)) $}}
\psfrag{e}{{\small cutting line}}
\includegraphics[scale=1.2]{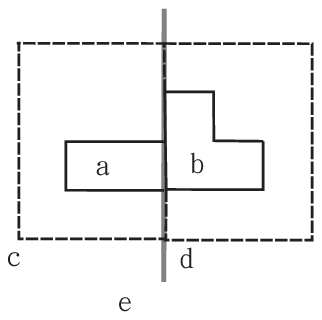}
\end{equation*}
\begin{equation*}
\text{Figure 2.3.}
\end{equation*}

Condition (ii) implies that there exist $x_{1},x_{2}\in\mathcal{U}$ such that $x_{1}\mid_{\mathbb{S}_{1}}=U\mid_{\mathbb{S}_{1}}$ and $x_{2}\mid_{\mathbb{S}_{2}}=U\mid_{\mathbb{S}_{2}}$.
Clearly, $x_{1}\mid_{ \mathbb{Z}_{a m \times bn}((i,j))}$ and $x_{2}\mid_{ \mathbb{Z}_{a m \times bn}((i+am,j+bn))}$ are in $\Sigma_{am\times bn}(\mathcal{U})$. By (\ref{eqn:2.2}), it can be verified that the forbidden pattern $U$ on $\mathbb{S}$ occurs in a global pattern of $\mathcal{U}$. This leads a contradiction. Then, $\mathcal{F}=\emptyset$. Therefore, $\mathcal{U}$ is a full shift space. The proof is complete.

\end{proof}

\begin{remark}
\label{remark:2.4}
Proposition \ref{proposition:1.1} can be generalized as follows. For any additive shift space $\mathcal{U}$, if there exists any tessellation $\mathbb{T}$ with

\begin{equation}\label{eqn:2.5}
h_{r}(\mathcal{U})=\frac{1}{|\mathbb{T}|}\log \Gamma_{\mathbb{T}}(\mathcal{U}),
\end{equation}
then $\mathcal{U}$ is a full shift.

\end{remark}

\section{Equal entropies}

\hspace{0.5cm} This section considers the case of $h_{\Omega}(\mathcal{U})=h_{r}(\mathcal{U})$.

Suppose $\Omega=\left\{\Omega(n)\right\}_{n=1}^{\infty}$ is a sequence of finite lattices of $\mathbb{Z}^{2}$ such that $\Omega(n)\subset \Omega(n+1)$
and $\underset{n=1}{\overset{\infty}{\bigcup}} \Omega(n)=\mathbb{Z}^{2}$. For a finite sublattice $\mathbb{L}\subset \mathbb{Z}^{2}$, the interior $\overset{\circ}{\mathbb{L}}$ of $\mathbb{L}$ is defined by

\begin{equation}\label{eqn:3.1}
\overset{\circ}{\mathbb{L}}=\left\{ (i,j)\in \mathbb{L} \hspace{0.1cm} \mid \hspace{0.1cm} (i+1,j), (i,j+1),(i+1,j+1)\in\mathbb{L}  \right\}
\end{equation}
and the boundary $\partial \mathbb{L}$ of $\mathbb{L}$ is defined by

\begin{equation}\label{eqn:3.2}
\partial \mathbb{L}= \mathbb{L}\setminus \overset{\circ}{\mathbb{L}}.
\end{equation}
Let $\mathbb{L}_{1}\subset \mathbb{L}_{2}\subset \mathbb{Z}^{2}$. Define the complement of $\mathbb{L}_{1}$ in $\mathbb{L}_{2}$ by

\begin{equation*}
\mathbb{L}_{1}'=\mathbb{L}_{2}\setminus \mathbb{L}_{1}.
\end{equation*}

For any $k,l\geq 1$, the two-dimensional lattice $\mathbb{Z}^{2}$ can be decomposed as disjoint $k\times k$ rectangular lattices, that is,

\begin{equation}\label{eqn:3.3}
\mathbb{Z}^{2}=\underset{a,b \in \mathbb{Z}}{\bigcup} \hspace{0.1cm} \mathbb{Z}_{k\times l}((ak,bl));
\end{equation}
see Fig. 3.1.

\begin{equation*}
\psfrag{a}{{\small $k$ }}
\psfrag{b}{{\small $l$} }
\includegraphics[scale=1.0]{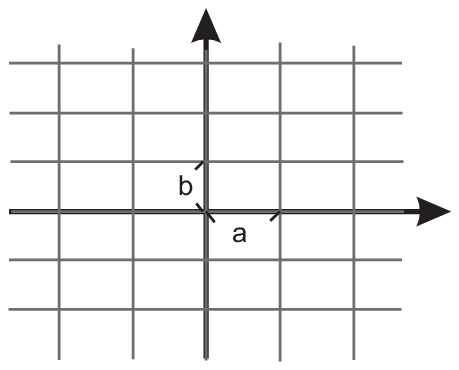}
\end{equation*}
\begin{equation*}
\text{Figure 3.1.}
\end{equation*}

Fix $k,l\geq 1$. For $\Omega=\left\{\Omega(n)\right\}_{n=1}^{\infty}$, let $\mathcal{I}_{k,l}(n)$ be the index set of disjoint $k\times l$ rectangular lattices that are contained in $\Omega(n)$, meaning that

\begin{equation}\label{eqn:3.4}
\mathcal{I}_{k,l}(n)= \left\{ (a,b) \hspace{0.1cm}\mid \hspace{0.1cm}  \mathbb{Z}_{k\times l}((ak,bl))\subseteq  \Omega(n), (a,b)\in\mathbb{Z}^{2}       \right\}.
\end{equation}
Denote by $\alpha_{k,l}(n)$ the cardinal number of $\mathcal{I}_{k,l}(n)$. Let $\Omega_{k,l}(n)$ be the union of all $k\times l$ rectangular lattices in $\mathcal{I}_{k,l}(n)$, so

\begin{equation}\label{eqn:3.4-1}
\Omega_{k,l}(n)= \underset{(a,b)\in\mathcal{I}_{k,l}(n) }{\bigcup} \hspace{0.1cm}  \mathbb{Z}_{k\times l}((ak,bl)).
\end{equation}
Let $\omega_{k,l}(n)$ be the complement of $\Omega_{k,l}(n)$ in $\Omega(n)$, meaning that

\begin{equation}\label{eqn:3.5}
\omega_{k,l}(n)=\Omega(n)\setminus \Omega_{k,l}(n) .
\end{equation}
Denote by $\beta_{k,l}(n)$ the cardinal number of $\omega_{k,l}(n)$. Notably,

\begin{equation}\label{eqn:3.6}
\left|\Omega(n)\right|=\alpha_{k,l}(n) kl+\beta_{k,l}(n).
\end{equation}

The following Lemma specifies the relationship between $\left|\partial\Omega(n)\right|$ and $\beta_{k,l}(n)$.

\begin{lemma}
\label{lemma:3.1}
Let $\Omega=\left\{\Omega(n)\right\}_{n=1}^{\infty}$. Then

\begin{equation}\label{eqn:3.7}
\underset{n\rightarrow\infty}{\limsup} \hspace{0.1cm} \frac{\beta_{k,l}(n)}{\left|\Omega(n)\right|}=0
\end{equation}
for all $k,l\geq 1$ if and only if

\begin{equation}\label{eqn:3.8}
\underset{n\rightarrow\infty}{\limsup} \hspace{0.1cm}\frac{\left|\partial\Omega(n)\right|}{\left|\Omega(n)\right|}=0.
\end{equation}

\end{lemma}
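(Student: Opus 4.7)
The plan is to prove each direction by a direct geometric comparison between $\omega_{k,l}(n)$ and $\partial\Omega(n)$, without invoking any mixing or subadditivity.

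For the direction $(\Leftarrow)$, I would show that $\beta_{k,l}(n) \leq C(k,l)\,|\partial\Omega(n)|$ for some constant $C(k,l)$ depending only on $k,l$. Each $p \in \omega_{k,l}(n)$ lies in a partial $k \times l$ box, so it is within grid-distance at most $k+l-2$ of some $q \notin \Omega(n)$; following any 4-adjacency grid path from $p$ to $q$, the first step that leaves $\Omega(n)$ identifies a point $p^\star \in \Omega(n)$ having a 4-neighbor outside $\Omega(n)$. Writing $\partial^{(4)}\Omega(n)$ for the set of such points, a ball-counting argument yields $|\omega_{k,l}(n)| \leq C'(k,l)\,|\partial^{(4)}\Omega(n)|$. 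The next step is to control $\partial^{(4)}\Omega(n)$ by $\partial\Omega(n)$: for any vector $s$ and any finite $\Omega$, the map $p \mapsto p+s$ restricts to a bijection $\Omega\cap(\Omega-s)\to \Omega\cap(\Omega+s)$, giving the identity $|\{p\in\Omega:p+s\notin\Omega\}| = |\{p\in\Omega:p-s\notin\Omega\}|$. Applied with $s = (1,0)$ and $s = (0,1)$, together with the obvious containment $\{p\in\Omega : p+s\notin\Omega\} \subseteq \partial\Omega$, this yields $|\partial^{(4)}\Omega(n)| \leq 4|\partial\Omega(n)|$. Combining these estimates and dividing by $|\Omega(n)|$ delivers (\ref{eqn:3.7}).

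For the direction $(\Rightarrow)$, the key observation is that if $p=(i,j)\in\partial\Omega(n)\setminus\omega_{k,l}(n)$, then the aligned $k\times l$ box containing $p$ is entirely inside $\Omega(n)$, so the point outside $\Omega(n)$ witnessing $p\in\partial\Omega(n)$ must lie outside that box — forcing $i \equiv k-1\pmod{k}$ or $j \equiv l-1\pmod{l}$. Writing $R_k, T_l$ for these two residue strips, one obtains
\[
|\partial\Omega(n)| \;\leq\; \beta_{k,l}(n) + |\Omega(n)\cap R_k| + |\Omega(n)\cap T_l|.
\]
I would then reuse the tiling decomposition with shape $k\times 1$: each aligned $k\times 1$ box contributes exactly one point to $R_k$, so the boxes lying in $\Omega_{k,1}(n)$ contribute $\alpha_{k,1}(n) \leq |\Omega(n)|/k$ such points while the remaining $R_k$-points of $\Omega(n)$ lie in $\omega_{k,1}(n)$; this gives $|\Omega(n)\cap R_k| \leq |\Omega(n)|/k + \beta_{k,1}(n)$, and symmetrically $|\Omega(n)\cap T_l| \leq |\Omega(n)|/l + \beta_{1,l}(n)$. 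Dividing by $|\Omega(n)|$ and taking $\limsup_n$ with $k,l$ held fixed kills all three $\beta$-terms by hypothesis, leaving $\limsup_n |\partial\Omega(n)|/|\Omega(n)| \leq 1/k + 1/l$; letting $k,l\to\infty$ finishes (\ref{eqn:3.8}).

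The main obstacle is the asymmetry of the definition of $\partial\Omega$ in (\ref{eqn:3.2}), which only detects defects in the three NE-directions. Because of this, a partial $k\times l$ box need not contain a $\partial\Omega$-point at all — all of its $\Omega$-points could cluster in the NE corner with non-$\Omega$ points only to their SW — so $(\Leftarrow)$ cannot be proven by a direct injection from $\omega_{k,l}$ into $\partial\Omega$ and must route through the symmetric 4-neighbor boundary via the translation-invariance identity. The same asymmetry, however, makes $(\Rightarrow)$ clean: only the right and top edges of boxes can leak $\partial\Omega$-points out of $\omega_{k,l}$, and the leakage is controlled by the auxiliary $k\times 1$ and $1\times l$ tilings.
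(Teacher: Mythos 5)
Your proof is correct, and in one important respect it is more careful than the paper's own argument. Your $(\Rightarrow)$ direction is essentially the paper's: the paper covers $\partial\Omega(n)$ by $\omega_{k,l}(n)$ together with the boundaries $\partial\mathbb{Z}_{k\times l}((ak,bl))$ of the fully contained boxes, each of size $O(k+l)$, and lets $k,l\to\infty$; your residue-strip count via the auxiliary $k\times 1$ and $1\times l$ tilings is equivalent bookkeeping for the same estimate (it additionally invokes the hypothesis at $(k,1)$ and $(1,l)$, which is available since (\ref{eqn:3.7}) is assumed for all $k,l$). The genuine divergence is in $(\Leftarrow)$. The paper simply asserts that every aligned box meeting $\omega_{k,l}(n)$ must meet $\partial\Omega(n)$ and concludes $\beta_{k,l}(n)\leq (kl-1)\left|\partial\Omega(n)\right|$; with the one-sided boundary of (\ref{eqn:3.1})--(\ref{eqn:3.2}) that assertion is false as stated -- if $B\cap\Omega(n)$ consists only of the north-east corner of $B$ while everything to the north-east of $B$ lies in $\Omega(n)$, that corner is an interior point, so $B$ meets $\omega_{k,l}(n)$ but not $\partial\Omega(n)$. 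You identified exactly this obstacle and repaired it by routing through the symmetric four-neighbour boundary and the translation identity $\left|\{p\in\Omega : p+s\notin\Omega\}\right|=\left|\{p\in\Omega : p-s\notin\Omega\}\right|$, which yields $\left|\partial^{(4)}\Omega(n)\right|\leq 4\left|\partial\Omega(n)\right|$ and hence $\beta_{k,l}(n)\leq C(k,l)\left|\partial\Omega(n)\right|$. So your argument follows the paper's tiling strategy but supplies a step that the published $(\Leftarrow)$ proof actually needs; the lemma itself is unaffected, since your fix shows the intended inequality holds up to the harmless extra factor $4$.
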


\begin{proof}
(i) ($\Rightarrow$) Firstly, we have

\begin{equation*}
  \partial \Omega (n) \subseteq \partial \Omega_{k,l}(n)\cup \partial \omega_{k,l}(n)\subseteq  \left(  \underset{(a,b)\in\mathcal{I}_{k,l}(n)}{\bigcup} \partial\mathbb{Z}_{k\times l}((ak,bl))\right)\cup \omega_{k,l}(n).
\end{equation*}
Clearly,

\begin{equation*}
\frac{\left| \partial\mathbb{Z}_{k\times l}((ak,bl))   \right|}{\left| \mathbb{Z}_{k\times l}((ak,bl))  \right|}\leq \frac{2(k+l)}{kl}
\end{equation*}
for $k,l\geq 1$. Then, by (\ref{eqn:3.6}) and (\ref{eqn:3.7}),
\begin{equation*}
\begin{array}{rl}
\underset{n\rightarrow\infty}{\limsup} \hspace{0.1cm}\frac{\left|\partial\Omega(n)\right|}{\left|\Omega(n)\right|} \leq &
 \underset{n\rightarrow\infty}{\limsup} \hspace{0.1cm} \frac{1}{\alpha_{k,l}(n)kl+\beta_{k,l}}\left\{\alpha_{k,l}(n)\left(\frac{2(k+l)}{kl}\right)+\beta_{k,l}(n)\right\} \\
 & \\
 \leq &   \underset{n\rightarrow\infty}{\limsup} \hspace{0.1cm} \frac{1}{\alpha_{k,l}(n)kl}\left\{\alpha_{k,l}(n)\left(\frac{2(k+l)}{kl}\right)+\beta_{k,l}(n)\right\} \\
  & \\
 = & \frac{2(k+l)}{(kl)^{2}}
\end{array}
\end{equation*}
for all $k,l\geq 1$. Therefore, (\ref{eqn:3.8}) follows.

(ii) ($\Leftarrow$) Clearly, $\omega_{k,l}(n)$ cannot contain any $\mathbb{Z}_{k\times l}((ak\times bl))$ for any $a,b\in\mathbb{Z}$. If $\omega_{k,l}(n)\cap \mathbb{Z}_{k\times l}((ak\times bl))\neq\emptyset$, then $\partial\Omega(n)\cap \mathbb{Z}_{k\times l}((ak\times bl))\neq\emptyset $. Hence, that

\begin{equation*}
\beta_{k,l}(n)\leq \left|\partial\Omega(n)  \right|(kl-1)
\end{equation*}
can be obtained. Therefore,

\begin{equation*}
\begin{array}{rl}
\underset{n\rightarrow\infty}{\limsup} \hspace{0.1cm}\frac{\beta_{k,l}(n)}{\left|\Omega(n)\right|} \leq &
(kl-1) \hspace{0.1cm} \underset{n\rightarrow\infty}{\limsup} \hspace{0.1cm} \frac{\left|\partial\Omega(n)\right|}{\left|\Omega(n)\right|} \\
 & \\
 = &  0
\end{array}
\end{equation*}
for all $kl>1$. When $k=l=1$, it is clear that $\beta_{1,1}(n)=0$ for all $n\geq 1$. The proof is complete.
\end{proof}

For $\Omega=\left\{\Omega(n)\right\}_{n=1}^{\infty}$, let $\left\{\mathbb{T}(n)\right\}_{n=1}^{\infty}$ be a sequence of tessellations $\mathbb{T}(n)$ such that $\Omega(n) \subset\mathbb{T}(n)$. Denote by $\Omega'(n)=\Omega'(n,\mathbb{T}(n))$ the complement of $\Omega(n)$ in $\mathbb{T}(n)$.

The following theorem is a generalization of Theorem 1.1.

\begin{theorem}
\label{theorem:3.2}
Suppose $\mathcal{U}$ is an additive shift space. Let $\Omega=\left\{\Omega(n)\right\}_{n=1}^{\infty}$. If

\begin{equation}\label{eqn:3.9}
\underset{n\rightarrow\infty}{\limsup} \hspace{0.1cm}\frac{\left|\partial\Omega(n)\right|}{\left|\Omega(n)\right|}=0,
\end{equation}
then
\begin{equation}\label{eqn:3.9-0}
h_{\Omega}(\mathcal{U})\leq h_{r}(\mathcal{U}).
\end{equation}
Furthermore, let $\left\{\mathbb{T}(n)\right\}_{n=1}^{\infty}$ be a sequence of tessellations $\mathbb{T}(n)$ such that $\Omega(n) \subset\mathbb{T}(n)$. If (\ref{eqn:3.9}),
\begin{equation}\label{eqn:3.9-00}
\underset{n\rightarrow\infty}{\limsup} \hspace{0.1cm}\frac{\left|\partial\Omega'(n)\right|}{\left|\Omega'(n)\right|}=0
\end{equation}
and
\begin{equation}\label{eqn:3.9-1}
\sup\left\{\frac{\left|\Omega'(n)\right|}{\left|\Omega(n)\right|}: n\geq 1\right\}<\infty
\end{equation}
holds, then

\begin{equation}\label{eqn:3.10}
h_{\Omega}(\mathcal{U})=h_{r}(\mathcal{U}).
\end{equation}
\end{theorem}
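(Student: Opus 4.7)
The plan is to handle the two implications separately, exploiting Lemma 3.1 together with the tessellation estimate (\ref{eqn:2.0-1}) of Ballister \emph{et al.}

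For the upper bound $h_{\Omega}(\mathcal{U})\leq h_{r}(\mathcal{U})$, I first fix $k,l\geq 1$ and partition $\Omega(n)$ into the disjoint $k\times l$ blocks $\Omega_{k,l}(n)$ and the leftover $\omega_{k,l}(n)$ of cardinality $\beta_{k,l}(n)$. By restricting patterns to each rectangular block independently and extending arbitrarily on $\omega_{k,l}(n)$, one obtains
\begin{equation*}
\Gamma(\Omega(n),\mathcal{U})\leq \Gamma_{k\times l}(\mathcal{U})^{\alpha_{k,l}(n)}\cdot N^{\beta_{k,l}(n)}.
\end{equation*}
Dividing $\log$ by $|\Omega(n)|=\alpha_{k,l}(n)kl+\beta_{k,l}(n)$ and passing to $\limsup$, Lemma \ref{lemma:3.1} together with (\ref{eqn:3.9}) forces $\beta_{k,l}(n)/|\Omega(n)|\to 0$, so
\begin{equation*}
h_{\Omega}(\mathcal{U})\leq \frac{1}{kl}\log \Gamma_{k\times l}(\mathcal{U}).
\end{equation*}
Taking the infimum over $k,l$ and invoking the standard identity $h_{r}(\mathcal{U})=\inf_{k,l}\frac{1}{kl}\log\Gamma_{k\times l}(\mathcal{U})$ stated in Section 2 delivers (\ref{eqn:3.9-0}).

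For the reverse inequality, I would use $\mathbb{T}(n)=\Omega(n)\cup \Omega'(n)$ (disjoint union) together with the trivial factorization bound
\begin{equation*}
\Gamma_{\mathbb{T}(n)}(\mathcal{U})\leq \Gamma(\Omega(n),\mathcal{U})\cdot \Gamma(\Omega'(n),\mathcal{U}),
\end{equation*}
and Ballister \emph{et al.}'s tessellation inequality (\ref{eqn:2.0-1}), which gives $\log\Gamma_{\mathbb{T}(n)}(\mathcal{U})\geq |\mathbb{T}(n)|h_{r}(\mathcal{U})$. Rearranging,
\begin{equation*}
\log\Gamma(\Omega(n),\mathcal{U})\geq |\mathbb{T}(n)|h_{r}(\mathcal{U})-\log\Gamma(\Omega'(n),\mathcal{U}).
\end{equation*}
The key observation is that condition (\ref{eqn:3.9-00}) lets me apply the already-proved upper bound to $\Omega'(n)$: for each fixed $k,l$,
\begin{equation*}
\log\Gamma(\Omega'(n),\mathcal{U})\leq \frac{|\Omega'(n)|}{kl}\log\Gamma_{k\times l}(\mathcal{U})+\beta'_{k,l}(n)\log N,
\end{equation*}
where $\beta'_{k,l}(n)/|\Omega'(n)|\to 0$ by Lemma \ref{lemma:3.1} applied to $\Omega'$.

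Combining these, dividing by $|\Omega(n)|$ and using $|\mathbb{T}(n)|=|\Omega(n)|+|\Omega'(n)|$, I would arrive at
\begin{equation*}
\frac{\log\Gamma(\Omega(n),\mathcal{U})}{|\Omega(n)|}\geq h_{r}(\mathcal{U})-\frac{|\Omega'(n)|}{|\Omega(n)|}\left(\frac{\log\Gamma_{k\times l}(\mathcal{U})}{kl}-h_{r}(\mathcal{U})\right)-\frac{\beta'_{k,l}(n)}{|\Omega(n)|}\log N.
\end{equation*}
Here the boundedness hypothesis (\ref{eqn:3.9-1}) is crucial: it keeps $|\Omega'(n)|/|\Omega(n)|$ controlled, while $\frac{1}{kl}\log\Gamma_{k\times l}(\mathcal{U})\to h_{r}(\mathcal{U})$ as $k,l\to\infty$, so the middle term can be made arbitrarily small by choosing $k,l$ large first; then $\beta'_{k,l}(n)/|\Omega'(n)|\to 0$ combined with (\ref{eqn:3.9-1}) drives the last term to zero as $n\to\infty$. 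Taking $\limsup$ in $n$ and then letting $k,l\to\infty$ yields $h_{\Omega}(\mathcal{U})\geq h_{r}(\mathcal{U})$, completing the equality.

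The main obstacle is the order of quantifiers in the lower-bound step: one must let $k,l$ grow to kill the \emph{rectangular excess} $\frac{1}{kl}\log\Gamma_{k\times l}(\mathcal{U})-h_{r}(\mathcal{U})$ (multiplied by the possibly positive constant $\sup_{n}|\Omega'(n)|/|\Omega(n)|$) \emph{before} taking $n\to\infty$ to absorb the boundary error. Without the sup-bound (\ref{eqn:3.9-1}), the coefficient in front of the rectangular excess could blow up and the whole argument would collapse; this is precisely why (\ref{eqn:3.9-1}) cannot be dropped in general, although it can be replaced by mixing assumptions on $\mathcal{U}$ as in Theorem \ref{Theorem:1.2}.
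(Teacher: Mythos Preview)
Your proof is correct and follows essentially the same approach as the paper: both parts use Lemma~\ref{lemma:3.1} to control the leftover $\beta_{k,l}(n)$, the sub-additive bound for the upper estimate, and the tessellation inequality (\ref{eqn:2.0-1}) together with the factorization $\Gamma_{\mathbb{T}(n)}\leq \Gamma_{\Omega(n)}\Gamma_{\Omega'(n)}$ for the lower estimate. Your treatment of the lower bound is in fact more explicit than the paper's, which compresses the same argument into a single chain of $\limsup$ inequalities; your tracking of the rectangular excess $\frac{1}{kl}\log\Gamma_{k\times l}(\mathcal{U})-h_r(\mathcal{U})$ and the order in which $k,l$ and $n$ are sent to infinity makes the role of hypothesis (\ref{eqn:3.9-1}) clearer.
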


\begin{proof}
(i) ($h_{\Omega}(\mathcal{U})\leq h_{r}(\mathcal{U})$) From (\ref{eqn:3.9}), by Lemma \ref{lemma:3.1}, we have

\begin{equation*}
\underset{n\rightarrow\infty}{\limsup} \hspace{0.1cm} \frac{\beta_{k,l}(n)}{\left|\Omega(n)\right|}=0.
\end{equation*}
From (\ref{eqn:3.6}), it is clear that
\begin{equation*}
\underset{n\rightarrow\infty}{\limsup} \hspace{0.1cm} \frac{\alpha_{k,l}(n)kl}{\left|\Omega(n)\right|}=1.
\end{equation*}
Then, by the sub-additive property,

\begin{equation*}
\begin{array}{rl}
 &\underset{n\rightarrow\infty}{\limsup} \hspace{0.1cm} \frac{1}{\left|\Omega(n)\right|}\log\Gamma_{\Omega(n)}(\mathcal{U})  \\ & \\ \leq &
\left(\underset{n\rightarrow\infty}{\limsup} \hspace{0.1cm} \frac{\alpha_{k,l}(n)kl}{\left|\Omega(n)\right|}\right)\left(\frac{1}{kl}\log\Gamma_{k\times l}(\mathcal{U})\right)+ \left(\underset{n\rightarrow\infty}{\limsup} \hspace{0.1cm} \frac{\beta_{k,l}(n)}{\left|\Omega(n)\right|}\right)\left(\frac{1}{\beta_{k,l}(n)}\log\Gamma_{\omega_{k,l}(n)}(\mathcal{U})\right) \\ & \\
= & \frac{1}{kl}\log\Gamma_{k\times l}(\mathcal{U})
\end{array}
\end{equation*}
for all $k,l\geq 1$. Therefore, $h_{\Omega}(\mathcal{U})\leq \underset{k,l\geq 1}{\inf}\frac{1}{kl}\log\Gamma_{k\times l}(\mathcal{U})=h_{r}(\mathcal{U})$.

(ii) ($h_{\Omega}(\mathcal{U})\geq h_{r}(\mathcal{U})$) 
%
%
Since $\{\mathbb{T}(n)\}_{n=1}^{\infty}$ is a sequence of tesselations, by (\ref{eqn:2.5}),

\begin{equation*}
 \frac{1}{\left|\mathbb{T}(n)\right|}\log\Gamma_{\mathbb{T}(n)}(\mathcal{U})\geq h_{r}(\mathcal{U})
\end{equation*}
for all $n\geq 1$. The sub-additive property implies

\begin{equation*}
\log\Gamma_{\Omega(n)}\geq \log\Gamma_{\mathbb{T}(n)}-\log \Gamma_{\Omega'(n)}.
\end{equation*}
By (\ref{eqn:3.9}), as the proof (i),

\begin{equation*}
h_{\Omega'}(\mathcal{U})\leq h_{r}(\mathcal{U}).
\end{equation*}
Therefore,

\begin{equation*}
\begin{array}{rl}
\underset{n\rightarrow\infty}{\limsup} \hspace{0.1cm} \frac{1}{\left|\Omega(n)\right|}\log\Gamma_{\Omega(n)}(\mathcal{U}) \geq & \underset{n\rightarrow\infty}{\limsup} \frac{|\mathbb{T}(n)|}{|\Omega(n)|}\left(\frac{1}{|\mathbb{T}(n)|}\log\Gamma_{\mathbb{T}(n)}\right)- \underset{n\rightarrow\infty}{\limsup} \frac{|\Omega'(n)|}{|\Omega(n)|}\left(\frac{1}{|\Omega'(n)|}\log\Gamma_{\Omega'(n)}\right) \\
& \\
\geq & \underset{n\rightarrow\infty}{\limsup} \frac{|\mathbb{T}(n)|}{|\Omega(n)|}h_{r}(\mathcal{U})-\underset{n\rightarrow\infty}{\limsup} \frac{|\Omega'(n)|}{|\Omega(n)|}h_{r}(\mathcal{U})\\
& \\
= & h_{r}(\mathcal{U}).
\end{array}
\end{equation*}
The proof is complete.

\end{proof}

The following example illustrates the applications of tessellations in Theorem \ref{theorem:3.2}.

\begin{example}
\label{example:3.1-1}
For $n\geq 1$, let $\Omega=\{\Omega(n)\}_{n=1}^{\infty}$ where

\begin{equation*}
\psfrag{a}{{\footnotesize$n^{2}$}}
\psfrag{b}{{\footnotesize $n$}}
\psfrag{c}{\hspace{-1.0cm} $\Omega(n)=$}
 \includegraphics[scale=1.0]{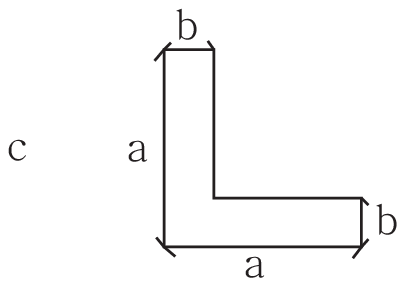}
\end{equation*}
\begin{equation*}
\text{Figure 3.2.}
\end{equation*}
for $n\geq 1$. Clearly, the smallest rectangular lattice that contains $\Omega(n)$ is $\mathbb{Z}_{n^{2}\times n^{2}}$. The size of $\Omega(n)$ is $2n^{3}-n^{2}$, and the size of the complement $\Omega'(n)$ of $\Omega(n)$ in $\mathbb{Z}_{n^{2}\times n^{2}}$ is $\left(n^{2}-n\right)^{2}$. Accordingly,
\begin{equation*}
\sup\left\{\frac{\left|\Omega'(n)\right|}{\left|\Omega(n)\right|}: n\geq 1\right\}=\infty.
\end{equation*}
Hence, Theorem \ref{Theorem:1.1} cannot be utilized to obtain $h_{\Omega}(\mathcal{U})=h_{r}(\mathcal{U})$. However, $\Omega(n)$ is a tessellation for all $n\geq1$. Therefore, by Theorem \ref{theorem:3.2}, $h_{\Omega}(\mathcal{U})=h_{r}(\mathcal{U})$ can be verified.

\end{example}

In the following, we show that when $\mathcal{U}$ is a block gluing shift space, then (\ref{eqn:1.14}) implies (\ref{eqn:1.16}).

\begin{theorem}
\label{theorem:3.3}
Suppose an additive shift space $\mathcal{U}$ is block gluing. If

\begin{equation*}
\underset{n\rightarrow\infty}{\limsup} \hspace{0.1cm}\frac{\left|\partial\Omega(n)\right|}{\left|\Omega(n)\right|}=0,
\end{equation*}
then

\begin{equation*}
h_{\Omega}(\mathcal{U})=h_{r}(\mathcal{U}).
\end{equation*}
\end{theorem}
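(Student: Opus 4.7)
The plan is to establish the remaining inequality $h_{\Omega}(\mathcal{U}) \geq h_{r}(\mathcal{U})$; the reverse direction $h_{\Omega}(\mathcal{U}) \leq h_{r}(\mathcal{U})$ is already furnished by part (i) of Theorem \ref{theorem:3.2}, which uses only (\ref{eqn:3.9}) and does not invoke block gluing. Let $M$ be the block gluing constant of $\mathcal{U}$ and fix arbitrary $k, l \geq 1$. I would partition $\mathbb{Z}^{2}$ into disjoint $(k+M) \times (l+M)$ tiles and, in each of the $\alpha_{k+M,l+M}(n)$ tiles that lie entirely inside $\Omega(n)$, embed a copy of $\mathbb{Z}_{k\times l}$ in the lower-left corner. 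The resulting family of $\alpha_{k+M,l+M}(n)$ congruent $k\times l$ rectangles $\{\mathbb{R}_{i}\}$ inside $\Omega(n)$ is then pairwise separated (in both coordinate directions) by at least $M$.

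The core step, which I anticipate will be the main obstacle, is upgrading the pairwise block gluing hypothesis to a \emph{simultaneous} gluing of the entire grid: given admissible patterns $U_{i} \in \mathcal{U}|_{\mathbb{R}_{i}}$, one must produce $x \in \mathcal{U}$ with $x|_{\mathbb{R}_{i}} = U_{i}$ for every $i$. My plan is an induction organized row by row. Within a fixed row, after combining the first $j$ of its rectangles into a global pattern $W_{j}$, take $\mathbb{B}_{j}$ to be the bounding rectangle of $\mathbb{R}_{1} \cup \cdots \cup \mathbb{R}_{j}$ in that row; the uniform grid layout forces $d(\mathbb{B}_{j}, \mathbb{R}_{j+1}) \geq M$, so block gluing applied to the \emph{rectangular} patterns $W_{j}|_{\mathbb{B}_{j}}$ and $U_{j+1}$ produces $W_{j+1}$. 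Iterating yields, for each row $r$, a global pattern realizing all prescribed subpatterns on a horizontal strip rectangle $\mathbb{S}_{r}$. An identical vertical induction then glues the strip patterns $Z_{r}|_{\mathbb{S}_{r}}$, whose pairwise distances remain at least $M$, into a single $x \in \mathcal{U}$. The delicate point is verifying at every step that the intermediate bounding rectangle is a genuine rectangle and remains sufficiently far from the next target rectangle, which the uniform grid spacing guarantees.

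Since distinct tuples $(U_{i})$ restrict differently to $\bigcup \mathbb{R}_{i} \subseteq \Omega(n)$, this gluing yields
\begin{equation*}
\Gamma_{\Omega(n)}(\mathcal{U}) \geq \Gamma_{k \times l}(\mathcal{U})^{\alpha_{k+M,l+M}(n)}.
\end{equation*}
Lemma \ref{lemma:3.1} together with (\ref{eqn:3.9}) gives $\beta_{k+M,l+M}(n)/|\Omega(n)| \to 0$, and (\ref{eqn:3.6}) then forces $\alpha_{k+M,l+M}(n)/|\Omega(n)| \to 1/((k+M)(l+M))$. Taking logarithms, dividing by $|\Omega(n)|$, and passing to the limsup yields
\begin{equation*}
h_{\Omega}(\mathcal{U}) \geq \frac{1}{(k+M)(l+M)} \log \Gamma_{k \times l}(\mathcal{U}).
\end{equation*}
Since this holds for every $k, l \geq 1$, and both $kl/((k+M)(l+M)) \to 1$ and $(kl)^{-1}\log \Gamma_{k \times l}(\mathcal{U}) \to h_{r}(\mathcal{U})$ as $k, l \to \infty$, I conclude $h_{\Omega}(\mathcal{U}) \geq h_{r}(\mathcal{U})$, completing the proof.
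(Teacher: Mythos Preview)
Your proposal is correct and follows essentially the same approach as the paper. The only cosmetic difference is a reparametrization: the paper tiles $\mathbb{Z}^{2}$ by $k\times l$ blocks and places $(k-M)\times(l-M)$ inner rectangles, whereas you tile by $(k+M)\times(l+M)$ blocks with inner $k\times l$ rectangles; the resulting inequalities are identical after the substitution $k\mapsto k+M$, $l\mapsto l+M$. Your row-by-row inductive gluing is exactly the standard justification for the step the paper records simply as ``it can be proven that $\Gamma_{\Omega(n)}\geq \Gamma_{\Omega_{k,l}(n)}\geq (\Gamma_{(k-M)\times(l-M)})^{\alpha_{k,l}(n)}$''.
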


\begin{proof}
The proof of $h_{\Omega}(\mathcal{U})\leq h_{r}(\mathcal{U})$ is the same as that of Theorem 3.2 (i). Only
$h_{\Omega}(\mathcal{U})\geq h_{r}(\mathcal{U})$ has to be verified. Suppose $\mathcal{U}$ is block gluing with gap $M\geq 1$. Then, $\mathbb{Z}^{2}$ can be arranged as in Fig. 3.3. Here the size of the dashed rectangles is $(k-M)\times (l-M)$.

\begin{equation*}
\psfrag{a}{{\footnotesize $k$ }}
\psfrag{b}{{\footnotesize $l$} }
\psfrag{c}{{\tiny $M$} }
\includegraphics[scale=1.5]{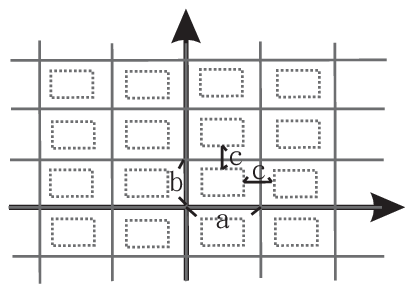}
\end{equation*}
\begin{equation*}
\text{Figure 3.3.}
\end{equation*}
Then, it can be proven that

\begin{equation*}
\Gamma_{\Omega(n)}\geq \Gamma_{\Omega_{k,l}(n)}\geq  \left(\Gamma_{(k-M)\times(l-M)}\right)^{\alpha_{k,l}(n)}
\end{equation*}
for all $k,l\geq M+1$. Hence,

\begin{equation*}
\begin{array}{rl}
\underset{n\rightarrow\infty}{\limsup} \hspace{0.1cm} \frac{1}{\left|\Omega(n)\right|}\log\Gamma_{\Omega(n)}(\mathcal{U})\geq& \left(\underset{n\rightarrow\infty}{\limsup} \frac{\alpha_{k,l}(n)(k-M)(l-M)}{|\Omega(n)|}\right)\left( \frac{1}{(k-M)(l-M)}\log \Gamma_{(k-M)\times (l-M)} \right)\\ & \\ = & \left(\frac{(k-M)(l-M)}{kl}\right)\left(\frac{1}{(k-M)(l-M)}\log \Gamma_{(k-M)\times (l-M)} \right)\\ & \\
\geq & \left(\frac{(k-M)(l-M)}{kl}\right) h_{r}(\mathcal{U})
\end{array}
\end{equation*}
for all $k,l\geq M+1$, where $\Omega_{k,l}(n)$ is defined in (\ref{eqn:3.4-1}). Therefore, $h_{\Omega}(\mathcal{U})\geq h_{r}(\mathcal{U})$. The proof is complete.

\end{proof}

Theorem \ref{theorem:3.2} can be immediately generalized as follows. For brevity, the proof is omitted.

\begin{corollary}
\label{corollary:3.4}
Suppose an additive shift space $\mathcal{U}$ is horizontally (or vertically) block gluing. If $\Omega=\left\{\Omega(n)\right\}_{n=1}^{\infty}$ is
horizontally (or vertically) decomposable and

\begin{equation*}
\underset{n\rightarrow\infty}{\limsup} \hspace{0.1cm}\frac{\left|\partial\Omega(n)\right|}{\left|\Omega(n)\right|}=0,
\end{equation*}
then

\begin{equation*}
h_{\Omega}(\mathcal{U})=h_{r}(\mathcal{U}).
\end{equation*}
\end{corollary}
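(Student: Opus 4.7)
The plan is to adapt the proof of Theorem \ref{theorem:3.3}, restricting the block-gluing step to the horizontal direction and letting the strip decomposition of $\Omega$ provide the tiling that is compatible with the direction of mixing.

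The upper bound $h_{\Omega}(\mathcal{U}) \le h_r(\mathcal{U})$ follows verbatim from part (i) of the proof of Theorem \ref{theorem:3.2}: that argument relies only on the boundary hypothesis $\limsup_n |\partial\Omega(n)|/|\Omega(n)| = 0$ together with Lemma \ref{lemma:3.1}, and no mixing property is invoked.

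For the reverse inequality, write $\Omega(n) = R_1(n) \sqcup \cdots \sqcup R_m(n)$ with $R_i(n) = \mathbb{Z}_{a_i(n) \times b_i(n)}((x_i(n), y_i(n)))$ the horizontal rectangular strips produced by the decomposition. Fix $k \ge M+1$. Inside each strip $R_i(n)$, place $\lfloor a_i(n)/k \rfloor$ sub-rectangles of size $(k-M) \times b_i(n)$ at horizontal positions $x_i(n) + \ell k$ for $\ell = 0, 1, \ldots, \lfloor a_i(n)/k \rfloor - 1$. All of these sub-rectangles share the strip's common $j$-baseline $y_i(n)$ and are pairwise separated by horizontal gaps of width $M$, so iterated horizontal block gluing produces independent fillings within each strip, giving
\[
\Gamma_{R_i(n)}(\mathcal{U}) \ \ge\ \Gamma_{(k-M) \times b_i(n)}(\mathcal{U})^{\lfloor a_i(n)/k \rfloor}.
\]
Combining the strips, applying the subadditive lower bound $\frac{1}{(k-M) b_i(n)} \log \Gamma_{(k-M) \times b_i(n)}(\mathcal{U}) \ge h_r(\mathcal{U})$, and using $|\Omega(n)| = \sum_i a_i(n) b_i(n)$, one obtains
\[
\frac{1}{|\Omega(n)|} \log \Gamma_{\Omega(n)}(\mathcal{U}) \ \ge\ \frac{k-M}{k}\, h_r(\mathcal{U}) + o(1),
\]
and letting first $n \to \infty$ and then $k \to \infty$ closes the argument.

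The main obstacle is the strip-combination step, since horizontal block gluing operates only along a common $j$-baseline whereas the $m$ strips sit at distinct heights. My plan is to resolve this inductively on $i$: extend an admissible pattern already defined on $R_1(n) \cup \cdots \cup R_{i-1}(n)$ to one on $R_i(n)$ by choosing the sub-rectangle patterns inside $R_i(n)$ independently via horizontal block gluing restricted to $R_i(n)$, and then filling the $M$-wide horizontal gaps and the thin inter-strip buffer using any admissible completion, whose contribution to $\log\Gamma_{\Omega(n)}$ is at most $O(|\partial\Omega(n)|) = o(|\Omega(n)|)$ by hypothesis and hence is absorbed in the $o(1)$ term above. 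Since the number of strips $m$ is a fixed constant independent of $n$, this iterative construction terminates and the multiplicative estimate survives in the limit.
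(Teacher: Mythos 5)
The paper gives no proof to compare against here: Corollary \ref{corollary:3.4} is stated as an ``immediate'' generalization of Theorem \ref{theorem:3.3} with the proof omitted for brevity. Measured on its own terms, your upper bound is fine (part (i) of Theorem \ref{theorem:3.2} uses only the boundary hypothesis and Lemma \ref{lemma:3.1}), and your within-strip estimate $\Gamma_{R_i(n)}(\mathcal{U})\ge\Gamma_{(k-M)\times b_i(n)}(\mathcal{U})^{\lfloor a_i(n)/k\rfloor}$ is a correct directional adaptation of the gluing in Theorem \ref{theorem:3.3}, since the sub-rectangles of a single strip do share a common baseline; in fact this step is more than you need, because $\frac{1}{|R_i(n)|}\log\Gamma_{R_i(n)}(\mathcal{U})\ge h_r(\mathcal{U})$ already follows from (\ref{eqn:2.0-1}) for each rectangle $R_i(n)$.

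The genuine gap is the strip-combination step, which you correctly identify as the main obstacle but do not actually close. Horizontal block gluing glues two patterns on rectangles with a \emph{common} $j$-baseline into some global configuration; it provides no extension property for a pattern already fixed on $R_1(n)\cup\cdots\cup R_{i-1}(n)$, which sits at different heights from $R_i(n)$, so there is no mechanism guaranteeing that the $\Gamma_{(k-M)\times b_i(n)}^{\lfloor a_i(n)/k\rfloor}$ choices on $R_i(n)$ can all coexist with the previously chosen strips. Your error estimate also misdiagnoses the problem: what must be controlled is not the number of patterns on the gaps and buffers (an upper-counting issue) but the loss of independence across strips, and the inter-strip cut lines are interior to $\Omega(n)$, hence not majorized by $|\partial\Omega(n)|$. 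Concretely, let $\mathcal{U}\subset\{0,1\}^{\mathbb{Z}^2}$ be the SFT forcing all rows to be equal ($x_{i,j}=x_{i,j+1}$); it is horizontally block gluing with $M=1$, yet for two vertically stacked rectangles $R_1=\mathbb{Z}_{a\times b}((0,0))$ and $R_2=\mathbb{Z}_{a\times b}((0,b))$ one has $\Gamma_{R_1}\Gamma_{R_2}=2^{2a}$ while $\Gamma_{R_1\cup R_2}=2^{a}$: a fixed pattern on $R_1$ admits exactly one compatible pattern on $R_2$, so ``independent'' choices across strips are impossible. (This does not contradict the corollary there, since $h_r(\mathcal{U})=0$, but it kills your multiplicative estimate.) Note that if ``horizontally decomposable'' is read so that the $m$ rectangles are horizontally arranged with a common baseline, then a single iterated horizontal gluing of the pieces shrunk by $M$ does the job and your subdivision into $(k-M)\times b_i(n)$ blocks is unnecessary; under your reading, with strips at distinct heights, an additional idea is required and the proof as written is incomplete.
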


Notably, $\Omega=\{\Omega(n)\}_{n=2}^{\infty}$ in Example 2.1 is horizontally decomposable, but a sequence $\{\mathbb{T}(n)\}_{n=1}^{\infty}$ of tessellations such that $\Omega$ satisfies (\ref{eqn:3.9-1}) in Theorem 3.2, may not exist.

\section{Unequal entropies}

\hspace{0.5cm} This section concerns the case of $h_{\Omega}(\mathcal{U})>h_{r}(\mathcal{U})$ where $\Omega$ is a generalization for $\Omega_{q}$, and proves Theorem 1.3.

Given a finite lattice $\mathbb{L}\subset\mathbb{Z}^{2}$, for $m\geq 1$, a point $(i,j)\in \mathbb{L}$ has horizontal length $m$ in $\mathbb{L}$ if
$m$ is the largest positive integer such that there exists $(r,s)\in\mathbb{L}$ such that such that $(i,j)\in\mathbb{Z}_{m \times 1}((r,s))\subset \mathbb{L}$. Similarly, a point $(i,j)\in \mathbb{L}$ has vertical length $m$ in $\mathbb{L}$ if $m$ is the largest positive integer such that
there exists $(r',s')\in\mathbb{L}$ such that such that $(i,j)\in\mathbb{Z}_{1 \times m}((r',s'))\subset \mathbb{L}$.

Let $\Omega=\left\{\Omega(n)\right\}_{n=1}^{\infty}$.
For $m\geq 1$, define the subset of $\Omega(n)$ with horizontal and vertical length $m$ by

\begin{equation}\label{eqn:4.1}
\Omega_{m}^{(h)}(n)=\left\{(i,j) \in\Omega(n) \hspace{0.1cm}\mid \hspace{0.1cm}  (i,j) \text{ has horizontal length }m \text{ in }\Omega(n) \right\}
\end{equation}

\begin{equation}\label{eqn:4.2}
\Omega_{m}^{(v)}(n)=\left\{(i,j) \in\Omega(n) \hspace{0.1cm}\mid \hspace{0.1cm}  (i,j) \text{ has vertical length }m \text{ in }\Omega(n) \right\},
\end{equation}
respectively. Denote by $\beta_{m}^{(h)}(n)=\left|\Omega_{m}^{(h)}(n) \right|$ and $\beta_{m}^{(v)}(n)=\left|\Omega_{m}^{(v)}(n) \right|$.

\begin{theorem}
\label{theorem:4.1}
Let $\Omega=\left\{\Omega(n)\right\}_{n=1}^{\infty}$. If there exists $m\geq 1$ such that

\begin{equation}\label{eqn:4.3}
\begin{array}{ccc}
\underset{n\rightarrow\infty}{\limsup} \hspace{0.1cm}\frac{\beta_{m}^{(h)}(n)}{\left|\Omega(n)\right|}>0 & \text{or} &
\underset{n\rightarrow\infty}{\limsup} \hspace{0.1cm}\frac{\beta_{m}^{(v)}(n)}{\left|\Omega(n)\right|}>0,
\end{array}
\end{equation}
then there exists a shift of finite type $\mathcal{U}$ with $h_{\Omega}(\mathcal{U})>h_{r}(\mathcal{U})$.

\end{theorem}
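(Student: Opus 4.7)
The plan is to exhibit, for each instance of condition (\ref{eqn:4.3}), a concrete SFT whose entropy on $\Omega(n)$ strictly exceeds its rectangular entropy. The natural candidate, suggested by the motivating example in the introduction, is $\mathcal{U}_{\mathcal{B}} \subset \{0,1\}^{\mathbb{Z}^{2}}$ defined by the single horizontal forbidden pair $x_{i,j} x_{i+1,j}=0$ (no vertical constraint) from (\ref{eqn:1.8}); for the vertical branch of the hypothesis the transposed shift defined by $x_{i,j}x_{i,j+1}=0$ works by the same argument, so I focus on the horizontal case and suppose $\underset{n\rightarrow\infty}{\limsup}\, \beta_m^{(h)}(n)/|\Omega(n)| \geq \delta > 0$.

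The crucial observation is that since $\mathcal{U}_{\mathcal{B}}$ imposes no vertical rule, admissible patterns on any finite lattice $\mathbb{L}$ decompose independently along maximal horizontal segments: a pattern on $\mathbb{L}$ is exactly a choice, for each maximal horizontal segment $S \subset \mathbb{L}$, of a $\{0,1\}$-word of length $|S|$ avoiding $11$. This yields the exact identity $\log\Gamma(\mathbb{L},\mathcal{U}_{\mathcal{B}}) = \sum_{S}\log a_{|S|}$. Since a segment of length $\ell$ contributes exactly $\ell$ cells, all of horizontal length $\ell$, to $\Omega_\ell^{(h)}(n)$, regrouping by horizontal length gives
\begin{equation*}
\log \Gamma_{\Omega(n)}(\mathcal{U}_{\mathcal{B}}) = \sum_{k \geq 1} \beta_k^{(h)}(n) \cdot \frac{\log a_k}{k}.
\end{equation*}

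Two ingredients then finish the argument. First, $h_r(\mathcal{U}_{\mathcal{B}}) = \log g$ with $g = (1+\sqrt{5})/2$, and by Proposition \ref{proposition:1.1} applied to $\mathbb{Z}_{k\times 1}$ (since $\mathcal{U}_{\mathcal{B}}$ is not a full shift), $\frac{\log a_k}{k} > \log g$ for every $k \geq 1$. Second, using $\sum_{k} \beta_k^{(h)}(n) = |\Omega(n)|$ and isolating the $k=m$ term yields
\begin{equation*}
\frac{1}{|\Omega(n)|}\log\Gamma_{\Omega(n)}(\mathcal{U}_{\mathcal{B}}) \geq \log g + \frac{\beta_m^{(h)}(n)}{|\Omega(n)|}\left(\frac{\log a_m}{m} - \log g\right),
\end{equation*}
and taking $\limsup$ produces $h_\Omega(\mathcal{U}_{\mathcal{B}}) \geq \log g + \delta\left(\frac{\log a_m}{m} - \log g\right) > h_r(\mathcal{U}_{\mathcal{B}})$.

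The only real subtlety I anticipate is the bookkeeping identity recasting $\log\Gamma_{\Omega(n)}(\mathcal{U}_{\mathcal{B}})$ as a cell-weighted sum indexed by horizontal length; one must verify that the \emph{rows decompose independently} counting is unaffected by segments in the same row being separated by holes of $\Omega(n)$, which is true because separated segments remain genuinely independent in $\mathcal{U}_{\mathcal{B}}$. After that identity is in place, Proposition \ref{proposition:1.1} upgrades the obvious lower bound $\frac{\log a_k}{k} \geq \log g$ to strict inequality, and no further mixing assumption is needed because the counting on $\Omega(n)$ is exact for this particular SFT.
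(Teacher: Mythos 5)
Your proposal is correct and follows essentially the same route as the paper: the same witness SFT $\mathcal{U}_{\mathcal{B}}$ (and its transpose for the vertical case), the strict inequality $\frac{1}{m}\log\Gamma_{m\times 1}(\mathcal{U}_{\mathcal{B}})>\log g$, and the isolation of the $\beta_{m}^{(h)}(n)$ contribution against a $\log g$-per-cell baseline. Your exact product-over-maximal-horizontal-segments identity is just a cleaner, fully justified form of the lower bound the paper states somewhat loosely in its displayed inequality.
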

\begin{proof}
Firstly, the case of $\underset{n\rightarrow\infty}{\limsup} \hspace{0.1cm}\frac{\beta_{m}^{(h)}(n)}{\left|\Omega(n)\right|}>0$ is considered. Let $\mathcal{U}=\mathcal{U}_{\mathcal{B}}$ that is defined by (\ref{eqn:1.8}). In the following, we will claim $h_{\Omega}(\mathcal{U}_{\mathcal{B}})>h_{r}(\mathcal{U}_{B})=\log g$.

From the rule of $\mathcal{U}_{\mathcal{B}}$, it can be verified that for any $\mathbb{L}\subseteq \mathbb{Z}^{2}$
\begin{equation*}
\frac{1}{|\mathbb{L}|}\log \Gamma_{\mathbb{L}}(\mathcal{U}_{B})\geq \log g.
\end{equation*}
In particular,
\begin{equation*}
\frac{1}{m}\log \Gamma_{m\times 1}(\mathcal{U}_{B}) > \log g.
\end{equation*}
Then,

\begin{equation*}
\begin{array}{rl}
\underset{n\rightarrow\infty}{\limsup} \hspace{0.1cm}\frac{1}{\left|\Omega(n)\right|}\log\Gamma_{\Omega(n)}(\mathcal{U}_{\mathcal{B}})\geq  & \log g +
\left(\underset{n\rightarrow\infty}{\limsup} \hspace{0.1cm}\frac{\beta_{m}^{(h)}(n)}{\left|\Omega(n)\right|}\right)\left(\frac{1}{m}\log\Gamma_{m\times 1}(\mathcal{U}_{\mathcal{B}})\right)  \\
& \\
> & \log g = h_{r}(\mathcal{U}_{B}).
\end{array}
\end{equation*}
Therefore, the result follows.

For considering the case of $\underset{n\rightarrow\infty}{\limsup} \hspace{0.1cm}\frac{\beta_{m}^{(v)}(n)}{\left|\Omega(n)\right|}>0$, the
shift of finite type $\mathcal{U}_{\mathcal{B}'}\subset \{0,1\}^{\mathbb{Z}^{2}}$ is considered, where the forbidden set $\mathcal{F}'$ of $\mathcal{U}_{\mathcal{B}'}$ is $\mathcal{F}'=\left\{\begin{array}{c} \includegraphics[scale=0.6]{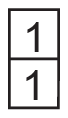} \end{array}\right\}$ , meaning that the basic set of admissible patterns $\mathcal{B}'\subset \{0,1\}^{\mathbb{Z}_{2\times 2}}$ is given as

\begin{equation}\label{eqn:1.8}
\mathcal{B}'=\left\{\begin{array}{ccccccccc}
 \includegraphics[scale=0.6]{0000.eps}, &
  \includegraphics[scale=0.6]{0001.eps}, &
   \includegraphics[scale=0.6]{0010.eps}, &
    \includegraphics[scale=0.6]{0100.eps}, &
     \includegraphics[scale=0.6]{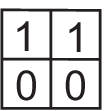}, &
      \includegraphics[scale=0.6]{0110.eps}, &
       \includegraphics[scale=0.6]{1000.eps}, &
        \includegraphics[scale=0.6]{1001.eps}, &
         \includegraphics[scale=0.6]{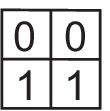}
\end{array}\right\}.
\end{equation}
$\mathcal{U}_{\mathcal{B}'}$ is considered as satisfying Golden-Mean condition $x_{i,j}x_{i,j+1}=0$ in the vertical direction and no constraint in the horizontal direction.

By a similar argument, it can be proven that  $h_{\Omega}(\mathcal{U}_{\mathcal{B}'})>h_{r}(\mathcal{U}_{B}')=\log g$. The proof is complete.

\end{proof}

\begin{example}
\label{example:4.1}
$\Omega_{q}=\left\{\Omega_{q}(n)\right\}_{n=1}^{\infty}$, $q\geq 2$, satisfies $\underset{n\rightarrow\infty}{\limsup} \hspace{0.1cm}\frac{\beta_{2}^{(h)}(n)}{\left|\Omega(n)\right|}>0$. By Theorem \ref{theorem:4.1}, the result (\ref{eqn:1.13}) can be recovered.
\end{example}

The spatial entropy $h_{\Omega}(\mathcal{U})$, such that $\Omega=\{\Omega(n)\}_{n=1}^{\infty}$ contains a lower-dimensional part whose size is comparable to that of its two-dimensional part, is closely related to the projectional entropy \cite{30,31}. Johnson \emph{et al.} introduced the projectional entropy of a $d$-dimensional, $d\geq 2$, shift space $\mathcal{U}$ as follows \cite{30}. Let $\mathcal{V}=\{\vec{\mathbf{v}}_{j}\in\mathbb{Z}^{d}  \hspace{0.1cm} \mid \hspace{0.1cm} 1\leq j\leq r\}$, $1\leq r< d$, be linear independent integral vectors in $\mathbb{Z}^{d}$. Let

\begin{equation*}
\mathbb{L}=\left\{s_{1}\vec{\mathbf{v}}_{1}+s_{2}\vec{\mathbf{v}}_{2}+\cdots+s_{r}\vec{\mathbf{v}}_{r} \hspace{0.1cm} \mid \hspace{0.1cm} s_{j}\in\mathbb{Z}, 1\leq j\leq r \right\}\subset\mathbb{Z}^{d}
\end{equation*}
be the subspace of $\mathbb{Z}^{d}$ spanned by integer multiples of vectors in $\mathcal{V}$. Then, the $\mathbb{L}$ projectional entropy $h_{\mathbb{L}}(\mathcal{U})$ of $\mathcal{U}$ is the topological entropy of the $\mathbb{Z}^{r}$-shift $\mathcal{U}\mid_{\mathbb{L}}$ with the $\mathbb{Z}^{r}$ shift action $\sigma\mid_{\mathbb{L}\times \mathcal{U}\mid_{\mathbb{L}}}$, so

\begin{equation*}
h_{\mathbb{L}}(\mathcal{U})=h_{top}(\mathcal{U}\mid_{\mathbb{L}}).
\end{equation*}

Johnson \emph{et al.} showed the topological entropy $h_{top}(\mathcal{U})$ is the lower bound of all projectional entropy $h_{\mathbb{L}}(\mathcal{U})$. Notably, the topological entropy $h_{top}(\mathcal{U})$ equals the rectangular spatial entropy $h_{r}(\mathcal{U})$, so

\begin{equation*}
h_{\mathbb{L}}(\mathcal{U})\geq h_{r}(\mathcal{U})
\end{equation*}
for all subspaces $\mathbb{L}\subset\mathbb{Z}^{d}$.

Moreover, Johnson \emph{et al.} also proved that if $\mathcal{U}$ is an extendable and block gluing $\mathbb{Z}^{2}$ shift of finite type and $\mathbb{L}\subset\mathbb{Z}^{2}$ is a one-dimensional sublattice, then $h_{top}(\mathcal{U})=h_{\mathbb{L}}(\mathcal{U})$ if and only if $\mathcal{U}=(\mathcal{U}\mid_{\mathbb{L}})^{\mathbb{Z}}$. Here, an additive shift space $\mathcal{U}$ is called extendable if for any allowable rectangular pattern $U_{m\times n}$ on $\mathbb{Z}_{m\times n}$, $U_{m\times n}$ can be extended to be a global pattern $U\in\mathcal{U}$ on $\mathbb{Z}^{2}$.

Schraudner \cite{31} constructed a $\mathbb{Z}^{3}$ shift of finite type, called  the electrical wire shift, and proved that the result of Johnson \emph{et al.} \cite{30} cannot be generalized to a $\mathbb{Z}^{d}$ shift of finite type, $d\geq 3$, under extendable and block gluing conditions. Schraudner also used a stronger mixing property, the uniform filling property, (UFP)
to prove that if $\mathcal{U}$ is a $\mathbb{Z}^{d}$ shift, $d\geq 2$, and $\mathbb{L}\subset\mathbb{Z}^{d}$ is a $r$-dimensional sublattices, $1\leq r < d$, then   $h_{top}(\mathcal{U})=h_{\mathbb{L}}(\mathcal{U})$ if and only if $\mathcal{U}=(\mathcal{U}\mid_{\mathbb{L}})^{\mathbb{Z}^{d-r}}$.

Now, for any $\mathbb{Z}^{2}$ shift space $\mathcal{U}$, let

\begin{equation}\label{eqn:4.4}
\hat{h}^{(1)}(\mathcal{U})=\sup \left\{h_{\mathbb{L}}(\mathcal{U})  \hspace{0.1cm} \mid \hspace{0.1cm} \mathbb{L} \text{ is an one-dimensional sublattice} \right\}.
\end{equation}
Clearly,
\begin{equation*}
\hat{h}^{(1)}(\mathcal{U})\geq h_{r}(\mathcal{U}).
\end{equation*}

Therefore, the following theorem can be obtained.

\begin{theorem}
\label{theorem:4.2}
Let $\mathcal{U}$ be a block gluing $\mathbb{Z}^{2}$ shift space. If

\begin{equation}\label{eqn:4.4-1}
h_{r}(\mathcal{U}) < \hat{h}^{(1)}(\mathcal{U}),
\end{equation}
then for any $h\in\left[h_{r}(\mathcal{U}),\hat{h}^{(1)}(\mathcal{U})\right)$, there exists $\Omega=\{\Omega(n)\}_{n=1}^{\infty}$ such that

\begin{equation}\label{eqn:4.5}
h_{\Omega}(\mathcal{U})=h.
\end{equation}
Furthermore, if $\hat{h}^{(1)}$ can be attained by some one-dimensional sublattice $\mathbb{L}'$, then there exists $\Omega=\{\Omega(n)\}_{n=1}^{\infty}$ such that
\begin{equation}\label{eqn:4.5}
h_{\Omega}(\mathcal{U})=\hat{h}^{(1)}=h_{\mathbb{L}'}(\mathcal{U}).
\end{equation}
\end{theorem}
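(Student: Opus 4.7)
The plan is to realize any target $h \in [h_r(\mathcal{U}), \hat{h}^{(1)}(\mathcal{U}))$ by taking $\Omega(n)$ to be the disjoint union of a growing two-dimensional square $Q(n)$ and a growing one-dimensional stick $T(n)$ along a one-dimensional sublattice $\mathbb{L}_0$ chosen so that $h_{\mathbb{L}_0}(\mathcal{U}) > h$. The two pieces will be kept separated by at least the block gluing gap $M$, so that patterns on $Q(n)$ and on $T(n)$ concatenate independently; then $h_{\Omega}(\mathcal{U})$ becomes a weighted average of $h_r(\mathcal{U})$ and $h_{\mathbb{L}_0}(\mathcal{U})$, with weights governed by the ratio $L(n)/m(n)^2$ of stick length to square area. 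Tuning this ratio realizes any $h \in [h_r(\mathcal{U}), h_{\mathbb{L}_0}(\mathcal{U}))$; letting the ratio tend to infinity recovers $\hat{h}^{(1)}(\mathcal{U}) = h_{\mathbb{L}'}(\mathcal{U})$ in the attainment case.

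\textbf{Reduction.} If $h = h_r(\mathcal{U})$, take $\Omega(n) = \mathbb{Z}_{n \times n}$ and invoke the standard identity $h_{\Omega}(\mathcal{U}) = h_r(\mathcal{U})$ (also a direct consequence of Theorem 3.2). Otherwise $h > h_r(\mathcal{U})$, and the definition of supremum yields a one-dimensional sublattice $\mathbb{L}_0 = \mathbb{Z}\vec{v}$ with $h_0 := h_{\mathbb{L}_0}(\mathcal{U}) > h$. For the attainment statement, take $\mathbb{L}_0 = \mathbb{L}'$, so $h_0 = \hat{h}^{(1)}(\mathcal{U})$. Set $\rho^{\ast} := (h - h_r(\mathcal{U}))/(h_0 - h) \in [0, \infty)$ in the interpolation case and $\rho^{\ast} := \infty$ in the attainment case.

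\textbf{Construction and entropy computation.} Choose positive integer sequences $m(n), L(n) \to \infty$ with $L(n)/m(n)^2 \to \rho^{\ast}$ and with $m(n+1)$ growing fast enough in terms of $m(n)$, $\rho^{\ast}$, and $\|\vec{v}\|_{\infty}$ to satisfy the geometric conditions below (for instance, $m(n+1) = C\, m(n)^2$ for a sufficiently large constant $C$ in the interpolation case, and $m(n+1) = m(n)^{a}$ for a fixed $a > 2$ in the attainment case). Let $Q(n)$ be the centered square of side $m(n)$, and place a translate $T(n)$ of the basic stick $\{k\vec{v} : 0 \leq k < L(n)\}$ outside $Q(n)$ so that the axis-aligned bounding rectangle $B(n) \supseteq T(n)$ is separated from $Q(n)$ by Euclidean distance at least $M$. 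Set $\Omega(n) := Q(n) \cup T(n)$. The fast growth of $m(n)$ guarantees $Q(n+1) \supseteq Q(n) \cup T(n)$, hence $\Omega(n) \subset \Omega(n+1)$, and $\bigcup_n Q(n) = \mathbb{Z}^2$ yields the coverage condition. Disjointness of $Q(n)$ and $T(n)$ makes the restriction map $\Sigma_{\Omega(n)}(\mathcal{U}) \to \Sigma_{Q(n)}(\mathcal{U}) \times \Sigma_{T(n)}(\mathcal{U})$ injective, giving $\log \Gamma_{\Omega(n)}(\mathcal{U}) \leq \log \Gamma_{Q(n)}(\mathcal{U}) + \log \Gamma_{T(n)}(\mathcal{U})$. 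For the reverse, given admissible $U_Q$ on $Q(n)$ and $U_T$ on $T(n)$, extend $U_T$ to some $U_B \in \Sigma_{B(n)}(\mathcal{U})$ by restricting any global $y \in \mathcal{U}$ that realizes $U_T$; then apply block gluing to the axis-aligned rectangles $Q(n)$ and $B(n)$ to glue $U_Q$ and $U_B$ into a global pattern whose restriction to $T(n) \subseteq B(n)$ is $U_T$. Hence the restriction map is also surjective and $\log \Gamma_{\Omega(n)}(\mathcal{U}) = \log \Gamma_{Q(n)}(\mathcal{U}) + \log \Gamma_{T(n)}(\mathcal{U})$. Dividing by $|\Omega(n)| = m(n)^2 + L(n)$ and using $m(n)^{-2} \log \Gamma_{Q(n)}(\mathcal{U}) \to h_r(\mathcal{U})$ and $L(n)^{-1} \log \Gamma_{T(n)}(\mathcal{U}) \to h_0$ (the latter being the definition of $h_{\mathbb{L}_0}$), the weighted average converges to $(\rho^{\ast} h_0 + h_r(\mathcal{U}))/(1 + \rho^{\ast}) = h$ in the interpolation case, and to $h_0 = \hat{h}^{(1)}(\mathcal{U})$ in the attainment case.

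\textbf{Main obstacle.} The hard part is the geometric bookkeeping: simultaneously enforcing (i) nesting $\Omega(n) \subset \Omega(n+1)$, (ii) coverage $\bigcup_n \Omega(n) = \mathbb{Z}^2$, (iii) Euclidean separation of $B(n)$ from $Q(n)$ by at least $M$, and (iv) the prescribed ratio $L(n)/m(n)^2 \to \rho^{\ast}$. The tension is between (i), which demands $m(n+1) \gtrsim L(n)\,\|\vec{v}\|_{\infty}$ so that each new square swallows the preceding stick, and (iv), which fixes the relative sizes of $L(n)$ and $m(n)^2$; this is resolved by the fast growth schedule for $m(n)$ specified above. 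A secondary technicality is that block gluing as stated applies only to axis-aligned rectangles, so when $\mathbb{L}_0$ is diagonal the argument must pass through the bounding rectangle $B(n)$, which is precisely why the separation condition is imposed on $B(n)$ rather than directly on $T(n)$.
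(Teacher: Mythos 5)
Your proposal is correct and follows essentially the same route as the paper: the paper's $\Omega(n)$ is exactly a square $\mathbb{Z}_{n\times n}$ together with a finite segment of the sublattice $\mathbb{L}$, with the ratio of the two parts tuned so that block gluing forces $h_{\Omega}(\mathcal{U})$ to be the corresponding convex combination $a\,h_{r}(\mathcal{U})+(1-a)\,h_{\mathbb{L}}(\mathcal{U})$. You merely supply the geometric bookkeeping (nesting, $M$-separation via the bounding rectangle, growth schedule) that the paper leaves as "easily verified."
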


\begin{proof}

Suppose $h\in\left[h_{r}(\mathcal{U}),\hat{h}^{(1)}(\mathcal{U})\right)$. It is clear that there exists a one-dimensional sublattice $\mathbb{L}$ with $h_{\mathbb{L}}(\mathcal{U})>h$.
Let $\mathbb{L}=\{s\vec{\mathbf{v}} \hspace{0.1cm}  \mid \hspace{0.1cm} s\in\mathbb{Z}\}$ for some $\vec{\mathbf{v}}\in\mathbb{Z}^{2}$.
 Then, there exists $0<a\leq 1$ such that
$h=a h_{r}(\mathcal{U})+(1-a) h_{\mathbb{L}}(\mathcal{U})$.

Now, let $\Omega=\{\Omega(n)\}_{n=1}^{\infty}$ where $\Omega(n)$ is the union of the two-dimensional part $\mathbb{Z}_{n\times n}$ and the one-dimensional part
 \begin{equation*}
 \mathbb{L}(n)=\{s\vec{\mathbf{v}}+(n,0) \hspace{0.1cm}  \mid \hspace{0.1cm}  0\leq s\leq b(n)\}
 \end{equation*}
 with
\begin{equation*}
\underset{n\rightarrow\infty}{\lim} \hspace{0.2cm}\frac{n^{2}}{n^{2}+b(n)+1}=a;
\end{equation*}
see Fig. 4.1.

\begin{equation*}
\begin{array}{c}
\psfrag{d}{$\Omega(n)=$}
\psfrag{a}{$n$}
\psfrag{b}{$\vec{\mathbf{v}}$}
\psfrag{c}{{\footnotesize \hspace{-0.5cm} $\mathbb{L}(n)$}}
\includegraphics[scale=1.0]{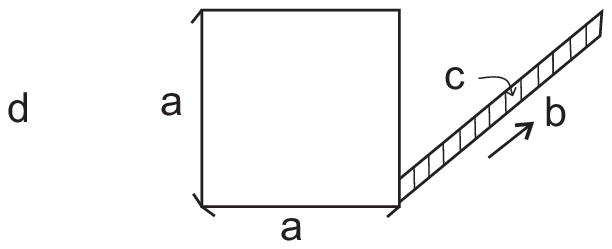}
\end{array}
\end{equation*}
\begin{equation*}
\text{Figure 4.1.}
\end{equation*}
Since $\mathcal{U}$ is block gluing, it can be easily verified that

\begin{equation*}
h_{\Omega}(\mathcal{U})= a h_{r}(\mathcal{U})+(1-a) h_{\mathbb{L}}(\mathcal{U})=h.
\end{equation*}
The proof is complete.

\end{proof}

\end{document}